\newcommand{\vdiv}{\mathop{\rm div}}
\newcommand{\RRM}{\mathbb{R}}
\newcommand{\ZZM}{\mathbb{Z}}
\newcommand{\NNM}{\mathbb{N}}
\newcommand{\SSM}{\mathbb{S}}
\newcommand{\bea}{\begin{eqnarray*}}
\newcommand{\eea}{\end{eqnarray*}}
\newcommand{\be}{\begin{equation}}
\newcommand{\ee}{\end{equation}}
\newcommand{\cla}{\mathcal{A}}
\newcommand{\clb}{\mathcal{B}}
\newcommand{\cll}{\mathcal{L}}
\newcommand{\cls}{\mathcal{S}}
\newcommand{\clu}{\mathcal{U}}
\newcommand{\clf}{\mathcal{F}}
\newcommand{\clv}{\mathcal{V}}
\newcommand{\clm}{\mathcal{M}}
\newcommand{\tlx}{{\tilde x}}
\newcommand{\tly}{{\tilde y}}
\newcommand{\0}{\Omega}
\newcommand{\G}{\Gamma}
\newcommand{\p}{\partial}
\newcommand{\wh}{\widehat}
\newcommand{\ov}{\overline}
\newcommand{\wt}{\widetilde}
\newcommand{\tlh}{{\tilde h}}
\newcommand{\tlu}{{\tilde u}}
\newcommand{\tlt}{{\tilde t}}
\newcommand{\eps}{\varepsilon}
\newtheorem{lemma}{Lemma}
\newtheorem{theorem}[lemma]{Theorem}
\newtheorem{prop}[lemma]{Proposition}
\theoremstyle{remark}
\begin{document}
\title
[{Hele-Shaw flow in thin threads: A rigorous limit result}] {Hele-Shaw flow in
thin threads:\\ A rigorous limit result}

\author{Bogdan-Vasile Matioc\ \ \ \ and \ \ \ \ Georg Prokert}
\address{\hspace{-2.5ex}B.-V. Matioc,
Institute of Applied Mathematics, Leibniz University Hannover,
 Germany.\newline
 E-mail: {\tt matioc@ifam.uni-hannover.de}\vspace{3mm}
\newline
G. Prokert, Faculty of Mathematics and Computing Science, Technical
University Eindhoven, 
\newline The Netherlands.
 E-mail: {\tt g.prokert@tue.nl}}

\begin{abstract}
We rigorously prove the convergence of appropriately scaled solutions of the
2D Hele-Shaw moving boundary problem with surface tension in the limit of
thin threads to the solution of the formally corresponding Thin Film
equation. The proof is based on scaled parabolic estimates for the
nonlocal, nonlinear evolution equations that arise from these problems. 

\bigskip

\noindent
{\bf Key Words and Phrases}: Hele-Shaw flow, surface tension, Thin Film
equation, degenerate parabolic equation

\noindent
{\bf 2010 Mathematics Subject Classification}: 35R37, 76D27, 76D08
\end{abstract}
\maketitle
\section{Introduction and main result}
In  theoretical fluid mechanics, the investigation of limit cases in which
the thickness of the flow domain is small compared to its other
lengthscale(s) is a classical subject. In most situations, the simplified
equations describing these limit cases are derived formally from the original
problem by expansion with respect to the small parameter describing the ratio
of the lengthscales. Although lots of work have been devoted to the
investigation of the limit equations (lubrication equations or various
so-called Thin Film equations), the question of justifying the approximation
by comparing the solutions of the original problem to those of the
corresponding limit problem is less studied. This is true in particular when
a moving boundary is an essential part of the original problem. 

In the case of 2D Hele-Shaw flow in a thin layer, the only  rigorous
limit result known to us has been proved by Giacomelli and Otto \cite{giot}.
Their approach is based on variational methods and can even handle degenerate
cases and complicated geometries. However, the existence of global smooth
solutions of the Hele-Shaw problem under consideration has to be presupposed,
and the obtained result on the closeness to some solution of the Thin Film
equation is in a relatively weak sense and technically rather complicated.

It is the aim of the present paper to provide a justification of the same
limit equation using quite different, more standard methods.
Starting from a strictly positive solution of the Thin Film equation, we prove the solvability of the corresponding moving boundary problems for large
times. If the initial shape is smooth, approximations to arbitrary
order and in arbitrarily strong norms are obtained. Moreover, our approach is
straightforwardly generalizable to a multidimensional setting.  However, it is
restricted to the nondegenerate case of strictly positive film thickness and to
a simple layer geometry.

More precisely, we consider 2D Hele-Shaw flow in a periodic, thin liquid domain
(i.e. a thread), symmetric about the $x$-axis, with surface tension as sole
driving mechanism for the flow (see Fig. \ref{thread}).
\begin{figure}\label{thread}
\begin{center}
 \includegraphics[width=0.7\textwidth]{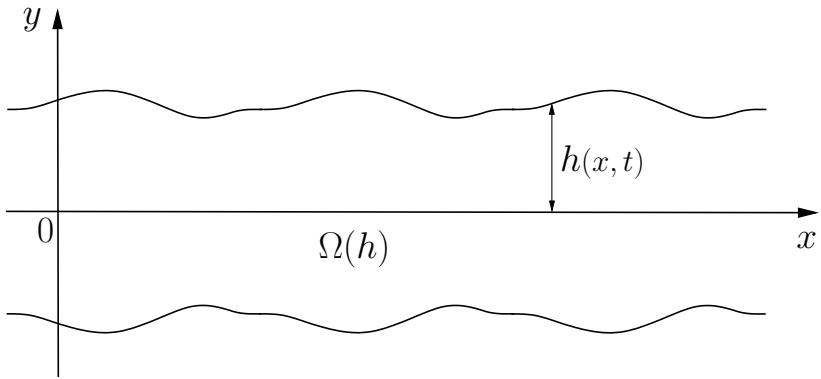}
\end{center}
\caption{ The considered geometry and basic notation}
\end{figure}

This problem (in the unscaled version, with surface tension coefficient
normalized to $1$) consists in finding a positive function
$h\in C^1([0,T],C^2(\SSM))$, $\SSM=\RRM/[0,2\pi)$, and, for each $t\in[0,T]$,
a function $u$ defined on 
\[\Omega(h):=\{(x,y)\in\SSM\times\RRM\,|\,|y|<h(x)\}\]
such that
\begin{equation}\label{mbp0}
\left\{\begin{array}{rcll}
-\Delta u&=&0&\mbox{ in $\Omega(h)$},\\
u&=&-\kappa(h)&\mbox{ on $\p\Omega(h)$},\\
\partial_t h+\nabla u\cdot(-h',\,1)^\top&=&0&\mbox{ on
$\G_+(h)$},
\end{array}\right.
\end{equation}
where $\G_+(h):=\{(x,h(x))\,|\, x\in\SSM\},$ and
\[\kappa(h):=\frac{h''}{(1+(h')^2)^{3/2}}\]
is the curvature of $\G_+(h)$. 
(Here and in the sequel, for the sake
of brevity we write $h$ instead of $h(t)$ or $h(\cdot,t)$ when no confusion
seems likely. Moreover, we identify functions  on $\SSM$ with
functions on $\p\0$  using the pull-back along $x\mapsto(x,\pm h(x))$.) Note
that $u$ represents the normalized pressure in the
Hele-Shaw cell, and, in view of Darcy's law, the first and third equation are
the incompressibility condition and the usual kinematic free boundary
condition. 
Moreover, a uniqueness argument shows that $u$ is symmetric 
with respect to $y$, i.e. $u(x,y)=u(x,-y)$ for all $(x,y)\in \Omega(h),$ 
meaning that $u_y=0$ on $\SSM\times\{0\}.$
Thus, this setting corresponds to the case when the bottom of the Hele-Shaw
cell,  which we take  as being   $\SSM\times\{0\},$
is impermeable.
We refrain from discussing further modelling aspects and refer
instead to the extensive literature on the subject, see e.g. \cite{eloc},
Ch.\ 1. Well-posedness results for \eqref{mbp0} (in slightly different
geometric settings and various classes of functions) have been proved in e.g.
\cite{bafr, essi, prhs} and in \cite{EM1, EM2} for non-Newtonian fluids.  

To consider thin threads we introduce a scaling parameter $\eps$, $0<\eps\ll
1$, and rescale by
\[x=\tlx,\quad y=\eps\tly, \quad h=\eps\tlh,\quad
\tlu(\tlx,\tly)=u(\tlx,\eps\tly).\]
Then $\tlu$ is defined on $\Omega(\tlh)$, and $(\tlh,\tlu)$ satisfies the
$\eps$-dependent problem
\begin{equation}\label{mbp1}
\left\{\begin{array}{rcll}
-\eps^2\tlu_{\tlx\tlx}-\tlu_{\tly\tly} &=&0&\mbox{ in $\Omega(\tlh)$},\\
\tlu&=&-\eps\kappa(\eps,\tlh)&\mbox{ on $\p\Omega(\tlh)$},\\
\partial_t \tlh+\nabla_{(\tlx,\tly)}\tlu\cdot(-\tlh',\,\eps^{-2})^\top
&=&0&\mbox{ on $\Gamma_+(\tlh)$},
\end{array}\right.
\end{equation}
where
\[\kappa(\eps,\tlh):=\frac{\tlh''}{(1+\eps^2(\tlh')^2)^{3/2}}.\]
Expanding $\tlu$ and $\kappa_\eps$ formally in power series in $\eps$ we
obtain
\[\tlu=\eps u_1+\eps^3u_3+O(\eps^4),\]
where in particular
\[(u_1)_\tlx=-\tlh''',\quad (u_3)_\tly=\tlh\tlh''''
\qquad\mbox{on $\G_+(\tlh)$}.\]
So
\[\partial_t\tlh+\eps(\tlh\tlh''')'=O(\eps^2),\]
and after rescaling $t=\eps^{-1}\tlt$, suppressing tildes and neglecting
higher order terms we obtain the well-known Thin Film equation
\be\label{thf}
\partial_th_0+(h_0h_0''')'=0\quad\text{on $\SSM$}.
\ee
Observe that \eqref{thf} is a fourth order parabolic equation for positive $h_0$
which degenerates as $h_0$ approaches $0$. For a review of the extensive
literature on this and related equations we refer to \cite{huls}. In the
modelling context discussed here, \eqref{thf} is used in \cite{const, dupont} to
study the breakup behavior of Hele-Shaw threads. Note that \eqref{thf} and
its multidimensional analogue
\begin{equation}\label{eq:tfea}
\partial_th+\vdiv(h\nabla\Delta h)=0,
\end{equation}
 also appear in models of ground water flow \cite{huls}. 

In view of the time rescaling, $h_0$ should be an approximation to 
\[h_\eps:=[t\mapsto\tlh(\eps^{-1}t)],\]
where $\tlh$ solves \eqref{mbp1}, i.e. $h_\eps$ solves (with an appropriate
$v$ and omitting tildes)
\begin{equation}\label{mbp2}
\left\{
\begin{array}{rcll}
-\eps^2v_{xx}-v_{yy} &=&0&\mbox{ in $\Omega(h_\eps)$},\\
v&=&-\kappa(\eps,h_\eps)&\mbox{ on $\p\Omega(h_\eps)$},\\
\partial_t h_\eps+\nabla v\cdot(-h_\eps',\,\eps^{-2})^\top&=&0&\mbox{ on $\G_+(h_\eps)$.}
\end{array}\right.
\end{equation}
(Of course, \eqref{mbp2} can be obtained immediately from \eqref{mbp0} by
choosing the ``correct'' scaling $u=\eps \tlu$, $t=\eps^{-1}\tlt$ at once.
That scaling, however, is in itself rather a result of the above
calculations.)

Our rigorous justification of the Thin Film approximation for \eqref{mbp0} will
therefore consist in showing that for any positive initial datum $h^\ast$
(from a suitable class of functions), the common initial condition
\[h_0(0)=h_\eps(0)=h^\ast\]
implies existence and uniqueness  of solutions to \eqref{thf} and
\eqref{mbp2} (for all sufficiently small $\eps$) on the same time interval,
and 
\be\label{conv}
h_\eps\rightarrow h_0\quad\mbox{as $\eps\downarrow 0$}
\ee
(in a suitable sense). 
This will be made precise in our main result Theorem \ref{thm2} below.
Observe that this, in particular, implies that the existence time in the
original timescale, i.e. for solutions of \eqref{mbp1}, goes to infinity as
$\eps$ becomes small.

We will make use of the following preliminary, nonuniform well-posedness result
for solutions to \eqref{mbp2}. It is not optimal with respect to the demanded
regularity but this is not our concern here.
\begin{theorem}\label{thm1}
Let $s\geq7$ be an  integer  and   $h^\ast\in H^{s+1}(\SSM)$  a positive
function.
Then we have:
 
\begin{itemize}
\item[\rm (i)] Existence and uniqueness: 
Problem \eqref{mbp2} with initial condition $h_\eps(0)=h^\ast$ has a unique
maximal solution
\[h_\eps\in C([0,T_\eps), H^{s}(\SSM))\cap C^1([0,T_\eps), H^{s-3}(\SSM))\]
for some $T_\eps=T_\eps( h^\ast)\in(0,\infty]$.

\item[\rm (ii)] Analyticity:
We have
\[[(x,t)\mapsto h_\eps(x,t)]|_{\SSM\times(0,T_\eps)}\in
C^\omega(\SSM\times(0,T_\eps)).\]

\item[\rm (iii)] Blowup:
If $T_\eps<\infty$ then 
\[\mathop{\rm lim \;inf}_{t\uparrow T_\eps}(\min_{x\in\SSM} h_\eps(x,t))=0\quad
\mbox{or}\quad \mathop{\rm lim\;sup}_{t\uparrow T_\eps}\|h_\eps(\cdot,t)\|_{H^s(S)}
=\infty.\]
\end{itemize}
\end{theorem}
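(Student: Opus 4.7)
The plan is to reformulate \eqref{mbp2} as a single quasilinear evolution equation for $h_\eps$. Given a positive $h\in H^{s+1}(\SSM)$, pull the elliptic subproblem back from $\0(h)$ to the fixed strip $\SSM\times(-1,1)$ via the diffeomorphism $(x,y)\mapsto(x,y/h(x))$. The transformed operator $-\eps^2\p_x^2-\p_y^2$ becomes a strictly elliptic operator with coefficients depending analytically on $h$ (as long as $h$ stays positive), so classical elliptic theory on the fixed strip yields a unique solution $v=v[\eps,h]$ with Dirichlet data $-\kappa(\eps,h)$, and the map $h\mapsto v[\eps,h]$ is analytic from the open cone of positive $H^{s+1}$-functions into the appropriate Sobolev space on the strip. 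Substituting into the kinematic condition on $\G_+(h)$ produces the evolution equation
\[\p_t h=\Phi_\eps(h),\qquad h(0)=h^\ast,\]
for a nonlocal quasilinear operator $\Phi_\eps$ of fourth spatial order.

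For part (i), I would apply the continuous maximal regularity framework for quasilinear parabolic equations (cf.\ \cite{essi,prhs}). The two inputs are that the linearization $-D\Phi_\eps(h)$, regarded as an unbounded operator on $H^{s-3}(\SSM)$ with domain $H^s(\SSM)$, generates an analytic semigroup, and that the map $h\mapsto D\Phi_\eps(h)$ is locally Lipschitz on the set of positive $H^{s+1}$-functions. The semigroup property reduces to a symbol computation: the principal part at a positive $h$ is of the form $-a_\eps(h)\p_x^4$ with $a_\eps(h)>0$, arising from the composition of the Dirichlet-to-Neumann operator associated with the anisotropic Laplacian and the leading $\p_x^2$-derivative in $\kappa(\eps,h)$. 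This is the standard parabolic structure of Hele-Shaw with surface tension, so abstract theory delivers a unique maximal solution in the claimed regularity class $C([0,T_\eps),H^s)\cap C^1([0,T_\eps),H^{s-3})$.

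Part (ii) is obtained through the usual parameter trick: the equation is equivariant under the one-parameter groups of spatial translations $(T_\lambda h)(x)=h(x+\lambda)$ and time rescalings $(S_\mu h)(t,x)=h(\mu t,x)$, and $\Phi_\eps$ depends analytically on $(\lambda,\mu)$. Applying the implicit function theorem to the fixed-point problem associated with the solution yields analytic dependence on $(\lambda,\mu)$ and hence joint real-analyticity of $h_\eps$ in $(x,t)$ on $\SSM\times(0,T_\eps)$, exactly as in \cite{essi}. Part (iii) follows by a standard continuation argument: if both $\min_{x\in\SSM}h_\eps(\cdot,t)$ stays bounded below by a positive constant and $\|h_\eps(\cdot,t)\|_{H^s}$ stays bounded as $t\uparrow T_\eps$, then the coefficients of the linearization remain in a compact subset of the admissible class with a uniform spectral bound, so the solution can be restarted from times arbitrarily close to $T_\eps$, contradicting maximality.

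The main obstacle is the analytic dependence of $\Phi_\eps$ on $h$ together with the clean extraction of the principal symbol of its linearization. The change of variables inserts $h$ nonlinearly into every coefficient of the transformed Laplacian, and one has to track how derivatives of $\kappa(\eps,h)$ pass through the Dirichlet-to-Neumann operator before being multiplied by the factor $h^{-2}$ coming from the normal component in the kinematic condition. Once this is carried out carefully, or verified by perturbation from the flat case $h\equiv\mathrm{const}$ where the Dirichlet-to-Neumann operator can be computed explicitly via Fourier series, parabolicity is manifest and all remaining steps follow by established machinery.
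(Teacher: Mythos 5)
Your route to (i) via continuous maximal regularity and analytic semigroup generation (as in \cite{essi}) is a valid alternative to what the paper actually does, which is parabolic energy estimates and Galerkin approximations following \cite{prhs}. Both are established methods for quasilinear parabolic moving boundary problems; the Galerkin route is more self-contained, while the semigroup route delivers the regularity structure more directly. Your arguments for (ii) (parameter trick exploiting translation invariance, along the lines of \cite{espr} and \cite{essi}) and for (iii) (continuation from a uniform lower bound on the local existence time over a fixed set of data) essentially coincide with the paper's.

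However, there is a genuine error in your symbol computation for (i): you assert that $\Phi_\eps$ is a nonlocal operator of \emph{fourth} spatial order with principal part $-a_\eps(h)\p_x^4$. For fixed $\eps>0$ this is incorrect, and it contradicts the regularity class you yourself write down, $C^1([0,T_\eps),H^{s-3}(\SSM))$, which is that of a \emph{third}-order operator. On the flat strip the Dirichlet-to-Neumann map for $-\eps^2\p_x^2-\p_y^2$ has Fourier symbol $\eps p\tanh(\eps p)$, i.e.\ it is of order $1$ (behaving like $\eps|p|$ for large $|p|$), not of order $2$. Composing with the curvature linearization ($\sim -p^2$) and the prefactor $\eps^{-2}$ from the kinematic condition yields a third-order nonlocal symbol $\sim -\eps^{-1}|p|^3$. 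The evolution operator becomes genuinely fourth order, of the form $-h\p_x^4$, only in the formal limit $\eps\to 0$ (where $\eps^{-2}\cdot\eps p\tanh(\eps p)\to p^2$ pointwise in $p$); that is precisely the Thin Film degeneration the rest of the paper is about, not what governs \eqref{mbp2} at fixed $\eps$. To make the semigroup argument rigorous you must therefore establish generation for the third-order nonlocal pseudodifferential operator, as Escher--Simonett do for unscaled Hele-Shaw with surface tension, rather than appealing to ellipticity of a local fourth-order operator.
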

Existence and uniqueness of the solution on some time interval $[0,\Theta_\eps(h^\ast)]$ can be proved by parabolic energy estimates and
Galerkin approximations as in \cite{prhs} (for a different geometry).
 The arguments given there can also be used to prove (iii) by a standard continuation argument. 
More precisely, for initial values $h^\ast$ that satisfy $h^\ast>\alpha>0$ and $\|h^\ast\|_s\leq M$ with any positive constants $\alpha$ and $M$, 
the existence time $\Theta_\eps(h^\ast)$ has a positive lower bound depending only on these constants. This implies the blowup result (iii). 
 For a proof
of (ii) we refer to the
framework given in \cite{espr} which is applicable here as well. Essentially,
analyticity follows from the analytic character of all occurring nonlinearities
together with the translational invariance of the problem. 

We are going to state the main result now. It sharpens \eqref{conv} as it also
gives the asymptotics of $h_\eps$ to arbitrary order $n\in\NNM$. This is
achieved by imposing strong smoothness demands on the initial value (or
correspondingly, on $h_0$). To avoid additional technicalities, we have not
strived for optimal regularity results.

\begin{theorem}\label{thm2}
Let $s,n\in\NNM$, $s\geq 10$ be given. 
There is an integer $\beta=\beta(s,n)\in\NNM$
such that for any positive solution 
\[h_0\in C([0,T],H^\beta(\SSM))\cap C^1([0,T],H^{\beta-4}(\SSM))\]
of \eqref{thf} there are $\eps_0=\eps_0(s,n,h_0)>0$,
$C=C(s,n,h_0)$ and functions 
\[h_1,\ldots h_{n-1}\in C([0,T],H^s(\SSM))\]
depending on $h_0$ only such that for all $\eps\in(0,\eps_0)$
\begin{itemize}
\item[\rm (i)] problem \eqref{mbp2} with initial condition
$h_\eps(0)=h_0(0)$ has precisely one solution 
\[h_\eps\in C([0,T],H^{\beta-1}(\SSM))\cap C^1([0,T],H^{\beta-4}(\SSM)),\]
\item[\rm (ii)] 
\[\left\|h_\eps-(h_0+\eps
h_1+\ldots+\eps^{n-1}h_{n-1})\right\|_{C([0,T],H^s(\SSM))}
\leq C\eps^n.\]
\end{itemize}
\end{theorem}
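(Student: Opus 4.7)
The strategy is to construct an asymptotic expansion of $h_\eps$ in powers of $\eps$ up to order $n$, to show that it solves \eqref{mbp2} up to a residual of order $\eps^n$, and then to control the remainder by means of uniform-in-$\eps$ parabolic estimates for the associated nonlocal evolution equation on $\SSM$.

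\emph{Step 1: Nonlocal reformulation of \eqref{mbp2}.} For any sufficiently regular positive profile $h$, the elliptic problem in \eqref{mbp2} has a unique symmetric solution $v=v[\eps,h]$. Inserting this solution into the kinematic condition reduces \eqref{mbp2} to a scalar nonlocal evolution equation
\[\p_t h_\eps=\Phi_\eps(h_\eps)\quad\text{on }\SSM,\qquad \Phi_\eps(h):=-\nabla v[\eps,h]\cdot(-h',\eps^{-2})^\top\big|_{\G_+(h)}.\]
This is the equation to which the scaled parabolic estimates announced in the abstract will apply.

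\emph{Step 2: Formal expansion and construction of $h_1,\ldots,h_{n-1}$.} Substituting the ansatz $\wh h:=h_0+\eps h_1+\ldots+\eps^{n-1}h_{n-1}$ into $\p_t\wh h-\Phi_\eps(\wh h)$ and expanding $\Phi_\eps$ as a power series in $\eps$, as outlined in the introduction, yields a triangular hierarchy: the order $\eps^0$ equation is the Thin Film equation \eqref{thf} for $h_0$, and for $k\ge 1$ the coefficient $h_k$ is determined by the linear inhomogeneous problem
\[\p_t h_k-\cll(h_0)h_k=F_k[h_0,\ldots,h_{k-1}],\qquad h_k(0)=0,\]
where $\cll(h_0)w:=-(h_0 w''')'-(w h_0''')'$ is the linearization of the Thin Film operator at $h_0$ and $F_k$ is an explicit differential polynomial in $h_0,\ldots,h_{k-1}$. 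Since $h_0$ is bounded below by a positive constant on $[0,T]$, $\cll(h_0)$ generates a well-posed parabolic evolution on high-order Sobolev spaces. Choosing $\beta=\beta(s,n)$ large enough compensates for the loss of derivatives incurred at each step of the recursion, so that $h_1,\ldots,h_{n-1}\in C([0,T],H^s(\SSM))$ and they depend only on $h_0$. By construction $\wh h(0)=h_0(0)$ and $\wh h\ge \min h_0/2>0$ uniformly in $t\in[0,T]$, provided $\eps<\eps_0$.

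\emph{Step 3: Residual and stability.} By the construction in Step 2, the residual
\[\clr_\eps:=\p_t\wh h-\Phi_\eps(\wh h)\]
satisfies $\|\clr_\eps\|_{C([0,T],H^s(\SSM))}\le C\eps^n$. Theorem \ref{thm1} furnishes a local-in-time solution $h_\eps$ with the regularity claimed in (i). Writing $h_\eps=\wh h+r_\eps$, the remainder solves
\[\p_t r_\eps-D\Phi_\eps(\wh h)r_\eps=\clr_\eps+\clm_\eps(\wh h,r_\eps),\qquad r_\eps(0)=0,\]
where $\clm_\eps$ collects terms at least quadratic in $r_\eps$. A uniform-in-$\eps$ parabolic estimate for the linearization $D\Phi_\eps(\wh h)$ on $H^s(\SSM)$, combined with Gronwall's inequality and a standard bootstrap treating the quadratic remainder, yields $\|r_\eps\|_{C([0,T_\eps^\ast],H^s)}\le C\eps^n$ on the maximal existence interval $[0,T_\eps^\ast)\cap[0,T]$. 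So long as this bound holds, $h_\eps$ stays bounded in $H^s$ and bounded below by a positive constant, so the blowup alternative Theorem \ref{thm1}(iii) rules out $T_\eps^\ast<T$, and the solution extends to $[0,T]$ for every $\eps<\eps_0$.

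\emph{Main obstacle.} The principal difficulty is the uniform-in-$\eps$ parabolic estimate for $D\Phi_\eps(\wh h)$. One needs to show that, after proper scaling, $D\Phi_\eps(\wh h)$ differs from the fourth-order Thin Film-type operator $\cll(\wh h)$ only by genuinely lower-order nonlocal corrections whose norms are bounded independently of $\eps$. This reduces to a careful analysis of the anisotropic elliptic problem $-\eps^2 v_{xx}-v_{yy}=0$ in the thin strip $\0(\wh h)$ and of the associated Dirichlet-to-Neumann map, tracking how the $\eps^{-2}$ prefactor in the kinematic condition cancels against the small vertical extent of $\0(\wh h)$ to produce an $O(1)$ limit operator. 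The scaled Sobolev estimates adapted to this thin geometry, together with the strict positivity of $\wh h$, form the technical core of the argument.
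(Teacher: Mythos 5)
Your Steps 1 and 2 match the paper's architecture: reduce \eqref{mbp2} to an abstract nonlocal evolution equation $\partial_t h = \clf(\eps,h)$ on $\SSM$, construct the asymptotic profile recursively via linear parabolic problems driven by the linearized Thin Film operator, and close the argument by combining an a priori estimate with the blowup alternative of Theorem \ref{thm1}(iii). The gap is in Step~3, which is the technical heart of the theorem and where you replace the actual argument by a claim that is not available.

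You assert a ``uniform-in-$\eps$ parabolic estimate for the linearization $D\Phi_\eps(\wh h)$ on $H^s(\SSM)$'' and that $D\Phi_\eps(\wh h)$ differs from the fourth-order Thin Film operator only by nonlocal corrections bounded \emph{independently of $\eps$}. Neither is established in the paper, and such a clean statement appears to be out of reach: the paper's coercivity estimate (Lemma~\ref{L:ana}) for the scaled Dirichlet--Neumann operator $F(\eps,h)$ only gives $\langle F(\eps,h)\{\varphi\}|\varphi\rangle \geq c\|\varphi\|_{1/2,\eps}^2$ with $\|\varphi\|_{1/2,\eps}=\|\varphi\|_0+\eps^{1/2}\|\varphi\|_{1/2}$, a norm that \emph{degenerates} to the $L^2$ norm as $\eps\to0$; and the linearization remainder $P_s$ carries a factor $\eps^{-2}$ (Lemma~\ref{L:6}). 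Consequently the parabolic smoothing of $D\clf(\eps,\cdot)$ is genuinely $\eps$-dependent, and the differential inequality for $\|h_\eps-\wh h\|_s$ produces negative powers of $\eps$, not a Gronwall-closable estimate at order $\eps^n$. The paper circumvents this by a two-tier energy argument that your sketch does not capture: first an $H^1$ estimate sharp in $\eps$ (Proposition~\ref{pr:apri}(i)), then an $H^s$ estimate obtained by interpolating $\|d\|_{s+1}$ against $\|d\|_1$ to absorb the $\eps^{-10s}$ loss, at the price of expanding the profile to order $k=n+5s-1$ (not $n-1$ as in your ansatz) so that the residual is $O(\eps^{k+1})$. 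Your $\wh h = h_0+\eps h_1+\dots+\eps^{n-1}h_{n-1}$ has residual $O(\eps^n)$, which, fed into a degenerate energy estimate, would only yield a bound of the form $\eps^{n-10s}$ in $H^s$ --- useless. (That the final statement (ii) nonetheless only refers to $h_1,\dots,h_{n-1}$ is because the extra correctors $h_n,\dots,h_k$ contribute $O(\eps^n)$ in $H^s$ and are discarded at the end.) Without the degenerate coercivity lemma, the $H^1$-then-interpolation device, and the much-higher-order expansion, Step 3 has no viable proof as written.
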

As expected, the functions $h_1,h_2,\ldots$ satisfy the (linear parabolic)
equations arising from formal expansion with respect to $\eps$. 
For details, see Lemma \ref{approxhek} below.

Both this theorem and its proof are in strong analogy to \cite{guprth}, where
the parallel problem for Stokes flow in a thin layer has been discussed. In this
case, the limit equation, also called a Thin Film equation, is 
\begin{equation}
\partial_th+\tfrac{1}{3}\vdiv(h^3\nabla\Delta h)=0.
\end{equation}

The remainder of this paper is devoted to the proof of Theorem \ref{thm2}. For
this purpose, we first transform \eqref{mbp2} to a fixed domain and rewrite the
problem as a nonlinear, nonlocal operator equation for $h_\eps$. Some scaled
estimates are gathered in Section \ref{sec2}. In Section \ref{sec3} we discuss
estimates for our nonlinear operators, while Section \ref{sec4}  provides the
necessary details on the series expansions that are used. Finally, the proof of
Theorem \ref{thm2} is completed in Section \ref{sec5}. 

Technically, the main difficulty in comparison with the unscaled problem is 
the fact that the elliptic estimates for the (scaled) Laplacian and related
operators degenerate as $\eps$ becomes small. To handle this, weighted norms
are introduced, and  estimates in such norms have to be proved. In particular,
the coercivity estimate for the transformed and scaled Dirichlet-Neumann
operator given in Lemma \ref{L:ana} will be pivotal. On the other
hand, the loss of regularity can be compensated by higher order expansions
and interpolation. 
Moreover, as in \cite{guprth}, the ellipticity of the
curvature operator is crucial. Therefore, the corresponding problems 
with gravity instead of surface tension appear intractable by the approach used
here.

\section{Scaled trace inequalities and an extension operator}\label{sec2}
In the remainder of this paper we let $\Gamma_\pm:=\SSM\times\{\pm 1\}$ 
 denote the boundary components of our fixed reference domain
$ \Omega:=\SSM\times(-1,1).$
We write  $H^s(\Omega),$ $H^s(\G_{\pm})$  for the usual 
$L^2-$based Sobolev spaces of order $s\in\RRM,$ while  by definition
$H^s(\p\Omega):= H^s(\Gamma_{+})\times H^s(\Gamma_{-})$.
Functions  $f\in H^s(\G_\pm)  $ may be represented  by their Fourier series
expansions
\[
f(x,\pm 1)=\sum_{p\in\ZZM} \wh f(p) e^{ipx},\qquad x\in\SSM,
\]
with $\wh f(p)$  the $p-$th Fourier coefficient of $f$.
Consequently,  the norm  of $f$ may be defined by the relation
\[
\|f\|_s^{\Gamma_{\pm}}:=\left(\sum_{p\in\ZZM}(1+p^2)^{s}|\wh f(p)|^2\right
)^{1/2}.
\]
Similarly, functions $w\in H^s(\Omega)$ may be written in terms of their Fourier
series
\[
w(x,y)=\sum_{p\in\ZZM}w_p(y)e^{ipx},\qquad (x,y)\in\Omega,
\]
 and the norm of $w$ is given, for $s\geq 0$, by the following expression:
\[
\|w\|_s^\Omega:= \left(\sum_{p\in\ZZM}
\left(\|w_p\|_s^{I}\right)^2+|p|^{2s}\left(\|w_p\|_0^{I}\right)^2\right )^{1/2},
\]
 where $I:=(-1,1).$
Given $\eps\in(0,1)$ and $s\geq1$, we introduce the following scaled norms on $H^s(\Omega)$
\[ 
\|w\|_{s,\eps}^\Omega:=\|w\|_{s-1}^\Omega+\|\p_2
w\|_{s-1}^\Omega+\eps\|w\|_s^\Omega, \qquad w\in H^s(\Omega),
\]
which are equivalent to the standard Sobolev  norm $\|\cdot\|_s^\0$ but,  for
$\eps\to 0$, degenerate to a weaker norm.
The scaling is indicated by the first equation in \eqref{mbp2}, and the  scaled
norms will enable us to take into account the different behaviour of the 
 partial derivatives of the function $v$ in system \eqref{mbp2} with respect  to
$\eps$ when $\eps\to 0$.
To do this we first  introduce an appropriate extension  operator for
functions $f\in H^s(\p\Omega)$ and reconsider some classical estimates in the
weighted norms.  

Given  $\eps\in(0,1),$ we set for simplicity
\[
\nabla _\eps:=(\p_{1,\eps},\p_{2,\eps}):=(\eps\p_1,\p_2).
\]
\begin{lemma}\label{L:Princ} 
\begin{itemize}
\item[$(a)$] There exists a positive constant $C$ such that  for all
$\eps\in(0,1)$ and $w\in H^1(\Omega)$ the following Poincar\'e's inequalities
are satisfied:
\begin{align}
\label{eq:Poin1}
&\|w\|_0^\Omega\leq C \|\nabla_\eps w\|_0^\Omega \quad  \text{if} \ \ \left.
w\right|_{\p\Omega}=0;\\[1ex]
\label{eq:Poin2}
&\|w\|_0^\Omega\leq C \eps^{-1}\|\nabla_\eps w\|_0^\Omega\quad \text{if} \ \ 
\int_{\G_+} w\, d\sigma=0.  
\end{align}
\item[$(b)$] There exists a positive constant $C$ such that the trace inequality
\begin{equation}\label{eq:scest}
\|w\|_t^{\p\Omega}+\sqrt\eps\|w\|^{\p\Omega}_{t+1/2}\leq C\|w\|^\Omega_{t+1,\eps}
\end{equation}
is satisfied by all  $\eps\in(0,1)$, $t\in\NNM$, and $w\in H^{t+1}(\Omega)$.
\item[$(c)$] Given $\eps\in(0,1)$, there exists  linear extension operators 
$E_\pm:H^{t+1/2}(\Gamma_\pm)\longrightarrow H^{t+1}(\Omega)$ such that
$(E_\pm f)|_{\Gamma_\pm}=f$, $E_\pm f$ are even with respect to $y$ and $E_\pm$
satisfy
the estimates
\begin{eqnarray}
\|E_\pm f\|^\Omega_{t+1,\eps}&\leq&
C_t\left(\|f\|_t^{\Gamma_\pm}+\eps^{1/2}\|f\|_{t+1/2}^{\Gamma_\pm}\right),
\label{eq:Esp}\\
\|\partial_2 E_\pm f\|_{-1/2}&\leq&
C\left(\|f\|_{-1/2}^{\Gamma_\pm}+\eps^{1/2}\|f\|_{1/2}^{\Gamma_\pm}\right),
\label{eq:tr-1/2}
\end{eqnarray}
$t\in\NNM$, $f\in H^{t+1/2}(\Gamma_\pm)$, with constants independent of $\eps$. 
\end{itemize}
\end{lemma}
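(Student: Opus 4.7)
The plan is to reduce every claim to a one-dimensional estimate by expanding in Fourier series in $x$, $w(x,y)=\sum_{p\in\ZZM}w_p(y)e^{ipx}$, and summing the mode-by-mode bounds via Parseval. The scaled gradient acts on the $p$-th mode as $(i\eps p,\p_2)$, so a factor of $\eps|p|$ is ``for free'' in each mode, and this is precisely what allows the scaled-norm estimates to close uniformly as $\eps\downarrow 0$.

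For \eqref{eq:Poin1}, the Dirichlet condition $w_p(\pm1)=0$ combined with the 1D Poincar\'e inequality on $I$ gives $\|w_p\|_{L^2(I)}\le C\|w_p'\|_{L^2(I)}$, and summing yields $\|w\|_0^\Omega\le C\|\p_2 w\|_0^\Omega\le C\|\nabla_\eps w\|_0^\Omega$. For \eqref{eq:Poin2}, the mean-zero condition translates to $w_0(1)=0$, so for the zero mode I integrate to get $\|w_0\|_{L^2(I)}\le C\|w_0'\|_{L^2(I)}$; for $p\ne 0$ I combine $|p|\ge 1$ with the trivial bound $(\eps|p|)^2\|w_p\|_{L^2(I)}^2\le\int_I(|w_p'|^2+\eps^2p^2|w_p|^2)\,dy$ to obtain $\|w_p\|_{L^2(I)}^2\le\eps^{-2}\int_I(|w_p'|^2+\eps^2p^2|w_p|^2)\,dy$, and summation produces the $\eps^{-1}$ factor. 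For \eqref{eq:scest}, I invoke the elementary 1D trace estimate $|w_p(\pm1)|^2\le C(\|w_p\|_{L^2(I)}^2+\|w_p\|_{L^2(I)}\|w_p'\|_{L^2(I)})$, multiply by $(1+p^2)^t+\eps(1+p^2)^{t+1/2}$, and split the cross term by Young's inequality at weight $\lambda=\eps(1+p^2)^{1/2}$; each resulting piece is then absorbed into one of $\|w\|_t^\Omega$, $\|\p_2 w\|_t^\Omega$, or $\eps\|w\|_{t+1}^\Omega$ after summing in $p$.

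For (c), I construct the extensions as the Fourier multipliers
\[(E_\pm f)(x,y):=\sum_{p\in\ZZM}\wh f(p)\,e^{ipx}\,\phi_p(y),\qquad \phi_p(y):=\frac{\cosh(\eps|p|y)}{\cosh(\eps|p|)}.\]
Each $\phi_p$ is even in $y$ with $\phi_p(\pm1)=1$, so $(E_\pm f)|_{\Gamma_\pm}=f$ and $E_\pm f$ is even as required. An explicit computation of $\|\phi_p^{(k)}\|_{L^2(I)}$ in the two regimes $\eps|p|\ge 1$ and $\eps|p|<1$ (in the first it is of order $(\eps|p|)^{2k-1}$, in the second of order $(\eps|p|)^{2k}$ when $k\ge 1$) and substitution into the definitions of $\|\cdot\|_{t+1,\eps}^\Omega$ and $\|\cdot\|_s^{\Gamma_\pm}$, followed by splitting the sum in $p$ at $|p|\sim\eps^{-1}$, produces \eqref{eq:Esp}. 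The bound \eqref{eq:tr-1/2} follows analogously from the pointwise estimate $|\phi_p'(\pm1)|=\eps|p|\tanh(\eps|p|)\le\eps|p|$ together with $(1+p^2)^{-1/2}\eps^2p^2\le\eps(1+p^2)^{1/2}$.

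The delicate point is the choice of profile in (c): the classical Poisson-type kernel $\cosh(|p|y)/\cosh(|p|)$ would make $\|\p_2 E_\pm f\|_t^\Omega$ behave like $\|f\|_{t+1/2}^{\Gamma_\pm}$, with no $\eps$-smallness available to absorb the resulting half-derivative loss, and \eqref{eq:Esp} would fail. Rescaling the inner frequency by $\eps$, so that the extension has characteristic decay length $\min(1,1/(\eps|p|))$ rather than $1/|p|$, is exactly what forces $\|\p_2 E_\pm f\|_t^\Omega$ to be compatible with the right-hand side $\|f\|_t^{\Gamma_\pm}+\eps^{1/2}\|f\|_{t+1/2}^{\Gamma_\pm}$, while simultaneously preserving $\phi_p(\pm1)=1$ and the evenness of the extension.
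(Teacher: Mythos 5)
Your proof is correct and follows essentially the same strategy as the paper: Fourier decomposition in $x$, mode-by-mode one-dimensional estimates for parts (a) and (b), and an explicit Fourier-multiplier extension whose decay scale near $y=\pm1$ is $\min(1,1/(\eps|p|))$ rather than $1/|p|$. The paper chooses the profile $w_p(y)=y^2e^{\eps|p|(y^2-1)}\wh f(p)$ (the ``$i$'' in the paper's formula is a typo, as the subsequent computation shows) instead of your $\cosh(\eps|p|y)/\cosh(\eps|p|)$, but both profiles are even in $y$, equal $1$ at $y=\pm 1$, and satisfy the same mode-wise bounds $\|w_p^{(k)}\|_0^{I\,2}\lesssim(1+(\eps|p|)^{2k-1})|\wh f(p)|^2$, so the remaining estimates are identical.
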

\begin{proof} The proof of $(a)$ is standard while that of  $(b)$ is similar  to
that of \cite[Lemma 3.1]{guprth}.
 To show $(c)$, set for $f(x,1)=\sum_p\wh f(p) e^{ipx}$
\[E_+f(x,y):=\sum_pw_p(y)e^{ipx},\qquad
w_p(y):=y^2e^{i|p|(y^2-1)}\wh f(p).\]
Then $\|w_p\|_{0}^I\leq |\wh f(p)|$ and for $p\neq 0$
\begin{align*}
{\|w_p\|_{0}^I}^{2}= |\wh f(p)|^2  \int_{-1}^1|y^4|e^{2\eps|p|(y^2-1)}\, dy\leq
2|\wh f(p)|^2\int_{0}^1ye^{2\eps|p|(y^2-1)}\, dy\leq \frac{|\wh 
f(p)|^2}{2\eps|p|}.
\end{align*}
Similarly, for $k\in\NNM$,
\[{\|w_p^{(k)}\|_{0}^I}^{2}\leq C_k(1+(\eps|p|)^{2k-1})|\wh f(p)|^2
\leq C_k(1+|p|^{2k-1})|\wh f(p)|^2.\]
Consequently,
\bea
{\|E_+f\|^\Omega_t}^2
&\leq&
\sum_p\left[{\|w_p\|_{t}^I}^2+|p|^{2t}{\|w_p\|_{0}^I}^2\right]
\leq
C{\|f\|^{\Gamma_+}_t}^2,\\
{\|\partial_2 E_+f\|^\Omega_{t+1}}^2
&\leq&
\sum_p\left[{\|w_p'\|_{t}^I}^2+|p|^{2t}{\|w_p'\|_{0}^I}^2\right]
\leq
C\left({\|f\|^{\Gamma_+}_t}^2+\eps{\|f\|^{\Gamma_+}_{t+1/2}}^2\right),\\
\eps^2{\|E_+f\|^\Omega_{t+1}}^2
&\leq&
\eps^2\sum_p\left[{\|w_p\|_{t+1}^I}^2+|p|^{2t+2}{\|w_p\|_{0}^I}^2\right]
\leq
C\eps{\|f\|^{\Gamma_+}_{t+1/2}}^2.
\eea
This proves (\ref{eq:Esp}). To verify (\ref{eq:tr-1/2}), note that
\[{\|\partial_2
E_+f\|^{\Gamma_+}_{-1/2}}^2=\sum_p(1+|p|^2)^{-1/2}|w_p'(1)|^2\]
and 
\[|w_p'(1)|^2\leq C(1+\eps^2|p|^2)|\wh f(p)|^2.\]
This implies the result for $E_+$, the construction for $E_-$ is analogous.
\end{proof}
Using an appropriate smooth
cutoff function, one can construct
a linear extension operator $E:H^{t+1/2}(\p\0)\longrightarrow
H^{t+1}(\Omega)$   such that
$(Ef)|_{\partial\Omega}=f$, $Ef$ is even with respect to $y$ if
$f(\cdot,-1)=f(\cdot,1)$, and $E$ satisfies
the estimates
\begin{eqnarray}
\|Ef\|^\Omega_{t+1,\eps}&\leq&
C_t\left(\|f\|_t^{\p\0}+\eps^{1/2}\|f\|_{t+1/2}^{\p\0}\right),
\label{eq:Esp2}\\
\|\partial_2 Ef\|_{-1/2}^{\p\0}&\leq&
C\left(\|f\|_{-1/2}^{\p\0}+\eps^{1/2}\|f\|_{1/2}^{\p\0}\right),
\label{eq:tr-1/2-2}
\end{eqnarray}
$t\in\NNM$, $f\in H^{t+1/2}(\p\0)$, with constants independent of
$\eps$.

\section{Uniform  estimates for  the scaled and transformed Dirichlet
problem}\label{sec3}

In this section we prove uniform estimates for the solution of the Dirichlet 
problem consisting of the first two equations of \eqref{mbp2}, 
by using the scaled norms defined above.
To this end we first transform the problem \eqref{mbp2} to the strip $\Omega$ by
using
a diffeomorphism depending on the moving boundary $h_\eps$.

Let $\clm$ be $\0$ or $\Gamma_\pm$, $\sigma>\dim \clm/2$, $t\leq\sigma$. We will
repeatedly and without explicit mentioning use the product estimate
\[\|z_1z_2\|_t^\clm\leq C\|z_1\|_t^\clm\|z_2\|_\sigma^\clm,\qquad
z_1\in H^t(\clm),\;z_2\in H^\sigma(\clm).\]

For the remainder of the paper, let $s$ and $s_0$ be such that
$s,s_0+1/2\in\NNM$, $s_0\geq 7/2$, $s\geq 2s_0+3$. (For example, $s_0=7/2$ and
any
$s\geq 10$ is possible, cf. Theorem \ref{thm2}.)
For given $\alpha,M>0$, define the open subset  
$\clu_s:=\clu(s,M,\alpha)$ of $H^s(\SSM)$ by
\[
\clu_s:=\{h\in H^s(\SSM)\,:\, \text{$\|h\|_s<M$ and $\min_{\SSM} h>\alpha$}\}.
\]

Moreover, define the (trivial) maps $\phi_{\pm}:\Gamma_\pm\longrightarrow \SSM$
by
$\phi_\pm(x,\pm 1)=x$.

To avoid losing regularity when transforming the problem onto the fixed
reference manifold $\Omega,$ we modify \cite[Lemma 4.1]{guprth} to obtain the
following result: 
\begin{lemma}[Extension of $h$]\label{L:esth} There exists a map
\[
[h\mapsto\widetilde h]\in\cll(H^\sigma(\SSM), H^{\sigma+1/2}(\Omega)), \quad
\sigma>3/2,
 \]
 with the following properties:
 \begin{itemize}
 \item[$(i)$] $\widetilde h$ is even, $\left.\wt
h\right|_{\G_+}=h\circ\phi_+,$
and $\left.\p_2\widetilde h\right|_{\G_+}=0;$
 \item[$(ii)$] If $h\in\clu_{s}$  and $\beta\in(0,1),$ then
$\Phi_h:=[(x,y)\longmapsto (x,y\widetilde h(x,y))]\in \mbox{\rm
Diff}^2(\Omega,\Omega(h))$.
 \end{itemize}
\end{lemma}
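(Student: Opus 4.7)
My plan is to construct $\widetilde h$ via an explicit Fourier-mode extension modelled on the operator $E_+$ from Lemma \ref{L:Princ}(c), tuned so that in addition to matching the trace $h$ on $\Gamma_\pm$, the normal derivative vanishes there. I would fix once and for all a smooth, rapidly decreasing profile $\phi:[0,\infty)\to\RRM$ with $\phi(0)=1$ and $\phi'(0)=0$ (for instance $\phi(\xi)=(1+\xi)e^{-\xi}$) and set
\[
\widetilde h(x,y):=\sum_{p\in\ZZM}\widehat h(p)\,\phi\bigl(|p|(1-y^2)\bigr)\,e^{ipx}.
\]
All three items of (i) then come out by direct inspection: evenness in $y$ from the evenness of $1-y^2$; the trace identity $\widetilde h|_{\Gamma_\pm}=h\circ\phi_\pm$ from $\phi(0)=1$; and $\partial_2\widetilde h|_{\Gamma_\pm}=0$ because $\partial_y[\phi(|p|(1-y^2))]=-2|p|y\,\phi'(|p|(1-y^2))$ vanishes at $y=\pm 1$ by $\phi'(0)=0$.

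For the continuity $[h\mapsto\widetilde h]\in\cll(H^\sigma(\SSM),H^{\sigma+1/2}(\Omega))$, I would repeat the mode-by-mode argument used for $E_+$ in the proof of Lemma \ref{L:Princ}(c). The key quantitative input is that, because $\phi$ is Schwartz, the profile $\phi(|p|(1-y^2))$ is concentrated in a $y$-strip of width $\sim|p|^{-1}$ around $\pm 1$, so its $k$-th $y$-derivative satisfies $\|\partial_y^k[\phi(|p|(1-y^2))]\|_0^I\leq C_k(1+|p|^{k-1/2})$ for each $k\in\NNM$. Weighting by $|\widehat h(p)|$ and summing inside the Fourier definition of $\|\cdot\|_{\sigma+1/2}^\Omega$ gives $\|\widetilde h\|_{\sigma+1/2}^\Omega\leq C_\sigma\|h\|_\sigma^\SSM$ for integer $\sigma$, and fractional $\sigma$ is handled by interpolation.

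For part (ii), Sobolev embedding applied to the bound just proved upgrades $\widetilde h$ to $C^2(\ov\Omega)$ for the values of $s$ considered here (certainly $s\geq 10$), and by construction $\Phi_h$ maps $\Gamma_\pm$ onto $\Gamma_\pm(h)$ and $\SSM\times\{0\}$ into itself. What remains, and is the main obstacle, is strict positivity of
\[
\det D\Phi_h(x,y)=\widetilde h(x,y)+y\,\partial_y\widetilde h(x,y)=\partial_y\bigl(y\widetilde h(x,y)\bigr)
\]
on $\ov\Omega$. At $y=\pm 1$ the expression reduces to $h\geq\alpha$, so positivity in a collar of $\Gamma_\pm$ is automatic; in the interior I would exploit the parameter $\beta\in(0,1)$ by rescaling the argument of $\phi$ (replacing $\phi(|p|(1-y^2))$ by $\phi(\beta^{-1}|p|(1-y^2))$, which preserves all properties in (i)) so as to concentrate the Fourier correction ever more tightly around $\Gamma_\pm$. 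For $\beta$ sufficiently small in terms of $\alpha$ and $M$, this forces $\widetilde h$ to remain in a small $C^1(\ov\Omega)$-neighborhood of the constant-in-$y$ extension of $h$, which secures $\det D\Phi_h\geq\alpha/2>0$ throughout $\ov\Omega$. Positivity of the Jacobian together with the strict monotonicity of $y\mapsto y\widetilde h(x,y)$ and its boundary values $\pm h(x)$ at $y=\pm 1$ then gives the bijection $\Omega\to\Omega(h)$, and the inverse function theorem completes the argument that $\Phi_h\in\mbox{\rm Diff}^2(\Omega,\Omega(h))$.
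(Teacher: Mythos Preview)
Your treatment of (i) and of the mapping property $H^\sigma(\SSM)\to H^{\sigma+1/2}(\Omega)$ is sound and is in the spirit of the paper's construction of $E_+$; the paper itself does not spell out a proof here but refers to \cite[Lemma~4.1]{guprth}, so an explicit Fourier-mode argument is a reasonable substitute.

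Your argument for (ii), however, contains a genuine gap. The claim that shrinking $\beta$ forces $\widetilde h$ into a small $C^1(\overline\Omega)$-neighbourhood of the constant-in-$y$ extension of $h$ is false. With profile $w_p(y)=\phi(\beta^{-1}|p|(1-y^2))$ one has
\[
\partial_y w_p(y)=-2\beta^{-1}|p|\,y\,\phi'\bigl(\beta^{-1}|p|(1-y^2)\bigr),
\]
and since $\sup_\xi|\phi'(\xi)|>0$ this gives $\|\partial_y w_p\|_{L^\infty(I)}\sim\beta^{-1}|p|$: concentrating the profile makes the vertical derivative \emph{larger} in the transition layer, not smaller. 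The same scaling infects the Jacobian. For the concrete choice $\phi(\xi)=(1+\xi)e^{-\xi}$ and $h(x)=1+\tfrac12\cos(Nx)\in\clu_s$ one computes $(yw_N(y))'=e^{-\xi}\bigl[1+\xi+2\beta^{-1}N\xi-2\xi^2\bigr]$ with $\xi=\beta^{-1}N(1-y^2)$, whose maximum is $\sim 2\beta^{-1}N/e$ near $\xi=1$; at points where $\cos(Nx)=-1$ this yields
\[
\det D\Phi_h=\widetilde h+y\,\partial_y\widetilde h\approx 1-\beta^{-1}N/e,
\]
which is negative already for $N=3$, $\beta=1$, and becomes more negative as $\beta\downarrow 0$. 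Thus neither your original extension nor its $\beta$-rescaled version produces a diffeomorphism for all $h\in\clu_s$. To rescue (ii) one needs a profile for which the multiplier $(y w_p(y))'$ is nonnegative for every $p$ (so that $h>0$ transfers directly to $\det D\Phi_h>0$), or an extension built from a positivity-preserving kernel; a generic Schwartz $\phi$ with $\phi(0)=1$, $\phi'(0)=0$ will not do.
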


In a first step we use  the diffeomorphism  $\Phi_h$ to transform the scaled
problem \eqref{mbp2} into a nonlinear  and nonlocal evolution equation on
$\SSM,$ cf. \eqref{eq:CP}.
Therefore, we note that if $h_\eps:[0,T_\eps)\to \clu_s$ is a solution  of the
scaled  problem \eqref{mbp2}, then setting $w:=-v\circ \Phi_{h_\eps},$ we find
that the pair
 $(h_\eps,w)$ solves the  problem 
\begin{equation} \label{eq:TS2}
\left\{
\begin{array}{rlllll}
-\cla(\eps, h)w&=&0& \text{in}&\Omega,\\[1ex]
w&=&\displaystyle\kappa(\eps,h)\circ\phi_\pm&\text{on}& \Gamma_\pm,\\[1ex]
  \partial_{ t}  h&=&\clb(\eps,h)w& \text{on}&\SSM,
\end{array}
\right.
\end{equation}
where   $\cla:(0,1)\times \clu_s\to \cll(H^{s-3/2}(\Omega), H^{s-7/2}(\Omega))$
is the linear operator given by
\[
\cla(\eps, h)w:=D_iD_i w,
\]
with 
\[
\text{$D_1:=\eps(\p_1+a_1\p_2) $, \qquad  $D_2:=a_2\p_2$},
\]
and  $a_i,$ $i=1,2$  given by
\[
a_1:=-\frac{\p_1(y\wt h)}{\p_2(y\wt h)},  \qquad a_2:=\frac{1}{\p_2(y\wt
h)}.
\]

Furthermore, we define the boundary  operator 
$\clb: (0,1)\times\clu_s\to \cll(H^{s-3/2}(\Omega), H^{s-3}(\SSM))$
 by the relation 
\begin{align*}
\clb(\eps,h)w(x)
:=&\left(-h'\p_1(w\circ\Phi_h^{-1})+\eps^{-2}\p_2(w\circ\Phi_h^{-1}
)\right)(x , h(x))\\[1ex]
=& \left(- h'(\partial_1
w+a_1\partial_2w)+\eps^{-2}a_2\partial_2 w\right){(x,1)}, \qquad
x\in\SSM.
\end{align*}
It is not difficult to see that  $\clb(\eps,h)$ may be  also written as 
\[
\clb(\eps,h)w=\eps^{-2}\left.\left(\frac{a_{i,\eps}}{a_2}D_i
w\right)\right|_{\Gamma_+}\circ\phi_+^{-1}\qquad\mbox{on
$\SSM$,}
\]
where $a_{1, \eps}:=\eps a_1$ and $a_{2, \eps}:=a_2.$

Given $f\in H^{s-2}(\SSM)$ and $(\eps,h)\in(0,1)\times\clu_s$, we denote
throughout  this paper by $w(\eps,h)\{f\}$ the solution $w$
of the Dirichlet problem 
\begin{equation} \label{eq:DPDP}
\left\{
\begin{array}{rlllll}
-D_iD_iw&=&0& \text{in}&\Omega,\\[1ex]
w&=&f\circ\phi_\pm&\text{on}& \Gamma_\pm.
\end{array}
\right.
\end{equation}
With this notation, problem \eqref{eq:TS2} is equivalent to the abstract evolution equation
\begin{equation}\label{eq:CP}
\partial_t h=\clf(\eps, h)
\end{equation}
where we set 
\begin{equation}\label{defF}
F(\eps,h)\{f\}:=\clb(\eps, h)w(\eps,h)\{f\},
\end{equation}
 and the nonlinear
and
nonlocal operator 
$\clf:(0,1)\times\clu_s\to H^{s-3}(\SSM)$ is given by the relation 
\begin{equation}\label{defclf}
\clf(\eps,h):=F(\eps,h)\{\kappa(\eps,h)\},\qquad (\eps, h)\in(0,1)\times\clu_s.
\end{equation}
It will become clear from the considerations that follow that $w$, $F$, and
$\clf$
depend smoothly on their variables, i.e.
\begin{equation}\label{eq:regularity}
\begin{aligned}
&w\in C^\infty((0,1)\times\clu_s, \cll(H^{s-2}(\SSM),
H^{s-3/2}(\Omega))),\\[1ex]
&F\in C^\infty((0,1)\times\clu_s, \cll(H^{s-2}(\SSM),
H^{s-3}(\SSM))),\\[1ex]
&\clf\in C^\infty((0,1)\times\clu_s, H^{s-3}(\SSM)).
\end{aligned}
\end{equation}  

We start by estimating $w$ and its derivatives, and finish the  section by
proving estimates for the function $F.$
Some of the proofs rely on the following scaled version of the  integration by parts formula
\begin{equation}\label{IP}
\int_\0\frac{1}{a_2}vD_iw\, dx=-\int_\0\frac{1}{a_2}wD_iv\,dx
+\int_{\Gamma_+}\frac{a_{i,\eps}}{a_2}vw\,d\sigma
-\int_{\Gamma_-}\frac{a_{i,\eps}}{a_2}vw\, d\sigma,
\end{equation}
which is true for all  functions $v,w\in H^{1}(\0).$

In order to prove estimates for  the solution operator $w$ of \eqref{eq:DPDP}, 
we begin by analysing the solution operator corresponding to the same problem
when both equations in \eqref{eq:DPDP} have a nonzero right hand side.
As a first result we have:

\begin{prop}\label{P:1} There exist constants $\eps_0$, $C$ depending only on
$\clu_s$ and $s_0$ such that for integer $t\in[1,s_0+1/2]$, $f\in
H^{t-1/2}(\SSM)$, $f_i\in H^{t-1}(\0)$, $i=0,1,2$, and $(\eps,h)\in
(0,\eps_0)\times\clu_s$ the solution of the Dirichlet problem
\begin{equation} \label{eq:DP}
\left\{
\begin{array}{rlllll}
-D_iD_iw&=&f_0+\p_{i,\eps}f_i& \text{in} &\0,\\[1ex]
w&=&f\circ\phi_\pm& \text{on} & \Gamma_\pm,
\end{array}
\right.
\end{equation}
satisfies
\begin{equation}\label{G}
\|w\|^\0_{t,\eps}\leq
C\left(\sum_{i=0}^2\|f_i\|_{t-1}^\0+\|f\|_{t-1}+\eps^{1/2}\|
f\|_{t-1/2}\right).
\end{equation}
Additionally,
\begin{equation}\label{F}
\|\p_2 w\|^{\p\0}_{-1/2}\leq
C\left(\sum_{i=0}^2\|f_i\|_{0}^\0+\|f_2\|^{\p\0}_{-1/2}+\|
f\|_{0}+\eps^{1/2}\| f\|_{1/2}\right).
\end{equation}
\end{prop}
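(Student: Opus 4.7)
The plan is a standard energy-plus-duality argument, made $\eps$-uniform via the scaled norms from Section~\ref{sec2}. First I would reduce to zero Dirichlet data by writing $w = Ef + \tilde w$ with the extension operator $E$ constructed after Lemma~\ref{L:Princ}; using \eqref{eq:Esp2}, the additional source terms $D_iD_i(Ef)$ fed into the equation for $\tilde w$ are dominated by the right-hand sides of \eqref{G} and \eqref{F}, so it suffices to argue under the assumption $f = 0$. For the base case $t = 1$, I would test the equation against $w/a_2$ and invoke \eqref{IP} (the boundary terms vanish because $w = 0$ on $\p\Omega$) to obtain
\begin{equation*}
\int_\Omega \frac{1}{a_2}|Dw|^2 \, dx = \int_\Omega \frac{w f_0}{a_2}\, dx - \int_\Omega f_i\, \p_{i,\eps}\!\left(\frac{w}{a_2}\right) dx.
\end{equation*}
Since $h \in \clu_s$ forces $a_2$ to be uniformly bounded from above and below, expanding $|Dw|^2 = \eps^2(\p_1 w + a_1 \p_2 w)^2 + a_2^2(\p_2 w)^2$ and absorbing the cross term $2\eps^2 a_1 \p_1 w\, \p_2 w$ via Cauchy with a small parameter yields the coercivity $\int_\Omega \tfrac{1}{a_2}|Dw|^2 \geq c\|\nabla_\eps w\|_0^2$ for all sufficiently small $\eps$; combining this with \eqref{eq:Poin1} and Cauchy--Schwarz on the right-hand side then gives \eqref{G} for $t = 1$.

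For $t \geq 2$ I would bootstrap by applying tangential derivatives $\p_1^j$, $j \leq t-1$, to the equation. Since both $\Omega$ and $D_iD_i$ are translation invariant in $x$, $\p_1^j w$ satisfies a problem of exactly the same form, with the source augmented by commutators $[\p_1^j, D_iD_i]w$; the product estimates recalled at the start of Section~\ref{sec3}, together with $a_1, a_2 \in H^{s-1/2}(\Omega)$ and the assumption $s \geq 2s_0 + 3$, render these commutators controllable by lower-order quantities under the inductive hypothesis. Applying the $t = 1$ estimate to $\p_1^j w$ then delivers the $H^{t-1}$-bounds on $\p_1^j w$ and on $\p_2\p_1^j w$. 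Pure normal derivatives are recovered by solving the equation algebraically for $\p_2^2 w$,
\begin{equation*}
a_2^2 \p_2^2 w = -D_1 D_1 w - a_2(\p_2 a_2)\, \p_2 w - f_0 - \p_{i,\eps} f_i,
\end{equation*}
and iterating. The trace estimate \eqref{F} is finally obtained by duality: for $\psi \in H^{1/2}(\p\Omega)$ I would test against $\varphi := E\psi$, so that the only surviving boundary contributions from \eqref{IP} and the standard integration by parts on $\p_{i,\eps}f_i$ pair $\p_2 w$ and $f_2$ against $\psi$; bounding the interior integrals by the $t = 1$ estimate and using $\|\varphi\|_{1,\eps}^\Omega \leq C(\|\psi\|_0^{\p\Omega} + \eps^{1/2}\|\psi\|_{1/2}^{\p\Omega})$ from \eqref{eq:Esp2} gives \eqref{F}.

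The main obstacle I anticipate lies in the very first step, uniform coercivity. The cross term $2\eps^2 a_1 \p_1 w\, \p_2 w/a_2$ couples the $\p_1$- and $\p_2$-directions, which enter $\|\nabla_\eps w\|_0$ on unequal footing, and the Cauchy splitting has to be arranged so that the resulting constant $c$ depends only on the uniform bounds of $a_1, a_2$ over $\clu_s$ and not on $\eps$; this is precisely the place where the smallness threshold $\eps_0$ is used. Once this is secured, the tangential bootstrap and the duality argument amount to a careful accounting of scale factors against the extension and trace tools already assembled in Section~\ref{sec2}.
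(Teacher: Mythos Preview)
Your proposal is correct and matches the paper's proof closely: reduction to $f=0$ via the extension $E$, energy estimate with \eqref{IP} and \eqref{eq:Poin1} for $t=1$, tangential differentiation plus algebraic recovery of $\p_2^2 w$ for higher $t$. The only deviation is in the trace estimate \eqref{F}, where the paper tests against the solution $u$ of $-D_iD_iu=0$, $u|_{\p\Omega}=\psi$ (so that Green's second identity removes the interior cross term $\int_\Omega a_2^{-1}D_iw\,D_i\varphi$), whereas you use $\varphi=E\psi$ directly and bound that extra term by $\|w\|_{1,\eps}^\Omega\|E\psi\|_{1,\eps}^\Omega$; both choices give the same bound.
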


\begin{proof} 
{\it Step $1$.} We show \eqref{G} for $t=1$. We will consider the case $f=0$
first.

Using relation \eqref{IP}, we proceed as in \cite[Lemma 3.2]{guprth} and find 
\begin{align*}
I:=&\int_\0\frac{1}{a_2}D_iwD_iw\, dx=-\int_\0\frac{1}{a_2}wD_iD_iw\,
dx=\int_\0\frac{1}{a_2}wf_0\, dx+\int_\0\frac{1}{a_2}w\p_{i,\eps}f_i\, dx\\[1ex]
=&\int_\0\frac{1}{a_2}wf_0\, dx-\int_\0\frac{1}{a_2}\p_{i,\eps}wf_i\,
dx-\int_\0\p_{i,\eps}\left(\frac{1}{a_2}\right)wf_i\, dx,
\end{align*}
where we used integration by parts to obtain the last equality. So
\begin{align*}
I&\leq C\sum_{i=0}^2\|f_i\|_0^\0\|w\|^\0_{1,\eps}.
\end{align*}
On the other hand,
\begin{equation*}
I\geq c \int_\0 |\nabla_\eps w|^2=c\left(\|\nabla_\eps w\|_0^\0\right)^2,
\end{equation*}
provided $\eps\in(0,\eps_0)$ and $\eps_0$ is sufficiently small (with a constant
$c$ independent of $\eps$).
Using Poincar\'e's inequality \eqref{eq:Poin1}, the estimate follows.

If $f\neq 0$, we let $z:=w-\wt f\in H^{t}(\0),$ where $\wt f:=Ef$.
Then $z=0$ on $\p\0$ and $z$ solves in $\0$ the equation $-D_iD_iz=\bar
f_0+\p_{i,\eps}\bar f_i$,
where
\begin{equation}\label{eq:fct}
\bar f_0:=f_0-\eps\p_2a_1D_1\wt f-\p_2a_2D_2\wt f, \ \ \bar
f_{1}=f_{1}+D_1\wt
f, \ \ \bar f_{2}=f_2+\eps a_1D_1\wt f+a_2D_2\wt f.
\end{equation}
Using \eqref{eq:Esp2} and the result for homogeneous boundary data, we
conclude that \eqref{G} holds with $t=1$.

{\it Step $2$.} We show \eqref{F}. Define 
\be\label{deftb}
\widetilde\clb(\eps,h)w:=\pm\eps^{-2}\frac{a_{i,\eps}}{a_2}D_iw\quad\text{
on }\Gamma_\pm
\ee
and observe
\begin{equation}\label{repd2}
\p_2w=\pm
\eps^2h(1+\eps^2{h'}^2)^{-1}(\widetilde\clb(\eps,h)w+h'\p_1w)\quad\mbox{on
$\Gamma_\pm$.}
\end{equation}
We start with the case $f=0$ again. 
Then $\partial_1w=0$ and thus
it is sufficient to estimate
$\eps^2\|\widetilde \clb(\eps,h)w\|_{-1/2}^{\p\0}$. For this purpose, pick
$\psi\in
H^{3/2}(\partial\Omega)$ and define $u\in H^2(\Omega)$ to be the solution of
the Dirichlet problem
\begin{equation*}
\left\{
\begin{array}{rlllll}
-D_iD_iu&=&0& \text{in}&\0,\\[1ex]
u&=&\psi&\text{on}& \p\0.
\end{array}
\right.
\end{equation*}
Then, by the transformed version of Green's second identity, 
\begin{align*}
&\eps^2\int_{\p\0}\widetilde \clb(\eps,h)w\psi\,d\sigma=\int_{\0}\frac{1}{a_2}
D_iD_iwu\ ,
dx= -\int_{\0}u\frac{f_0+\eps\partial_1f_1+\partial_2f_2}{a_2}\,dx\\
=&\int_{\0}\left[\eps \p_1\left(\frac{u}{a_2}\right) 
f_1+\p_2\left(\frac{u}{a_2}\right)f_2-\frac{uf_0}{a_2}\right]\,
dx+\int_{\Gamma_-}
\frac{u}{a_2}f_2d\sigma-\int_{\Gamma_+}\frac{u}{a_2}f_2\,d\sigma.
\end{align*}
Consequently, applying the result of Step 1 to $u$, 
\begin{align*}
\eps^2\left|\int_{\p\0}\widetilde\clb(\eps,h)w\psi\,d\sigma\right|&\leq C
\left(\|u\|_{1,\eps}^\0\sum_{i=0}^2\|f_i\|_0^\0+\|f_2\|^{\p\0}_{-1/2}
\|\psi\|_{1/2}^{\p\0}\right)\\
&\leq
C\left(\|f_2\|^{\p\0}_{-1/2}+\sum_{i=0}^2\|f_i\|_0^\0\right) 
\|\psi\|_{1/2}^{\p\0}.
\end{align*}
This implies \eqref{F} for $f=0$. To treat the general case, define $\tilde f$,
$\bar f_i$ and $z$ as in Step 1. Then, by the preliminary result,
\begin{align*}
\|\p_2w\|_{-1/2}^{\p\0}&\leq \|\p_2z\|_{-1/2}^{\p\0}+\|\p_2Ef\|_{-1/2}^{\p\0},\\
\|\p_2z\|_{-1/2}^{\p\0}&\leq \textstyle{\sum_{i=0}^2}\|\bar f_i\|_0^\0
+\|\bar f_2\|_{-1/2}^{\p\0},\\
\textstyle{\sum_{i=0}^2}\|\bar f_i\|_0^\0&\leq
\textstyle{\sum_{i=0}^2}\|f_i\|_0^\0+C\|Ef\|_{1,\eps}^\0,\\
\|\bar f_2\|_{-1/2}^{\p\0}&\leq \|
f_2\|_{-1/2}^{\p\0}+C\left(\eps^2\|f\|_{1/2}^{\p\0}+\|\p_2Ef\|_{-1/2}^{\p\0}
\right),
\end{align*}
and the result follows from \eqref{eq:Esp2} and \eqref{eq:tr-1/2-2}.

{\it Step $3$.} We prove \eqref{G}  by induction over $t$. The case $t=1$
has been treated in Step 1. Assume now \eqref{G} for an integer
$t\in[1,s_0-1/2]$.
Differentiating both equations of  \eqref{eq:DP} with respect to $x$ we find
 that $\partial_1 w$ satisfies
\begin{equation*}
\left\{\begin{array}{rcll}
-D_iD_i \partial_1 w&=&\bar f_0+\partial_{i,\eps}\bar f_i&\mbox{in $\0$},\\
\partial_1 w&=&  f'\circ\phi_\pm
&\mbox{on $\Gamma_\pm$,}
\end{array}
\right.
\end{equation*}
where
\begin{align*}
\bar f_0&=\p_1f_0-\partial_{12}a_{i,\eps}D_i
w-\partial_1a_{i,\eps}\partial_2a_{i,\eps}\partial_2 w,\\
\bar  f_1&= \p_1f_1+\eps\partial_1a_1\partial_2w,\\
\bar
f_2&=\p_1f_2+\partial_1a_{i,\eps}D_iw+a_{i,\eps}\partial_1a_{i,\eps}\partial_2w.
\end{align*}
Using this and the induction assumption, we conclude that 
\begin{equation}\label{eq:G1}
\|\partial_ 1w\|_{t,\eps}^\0\leq C\left(\sum_{i=0}^2\|f_i\|_{t}^\0+\|
f\|_{t}+\eps^{1/2}\| f\|_{t+1/2}\right).
\end{equation}
In order to estimate   $\|\p_{22}w\|_{t-1}^\0$,
we use the first equation of \eqref{eq:DP} and the explicit representation
\begin{equation}\label{repA}
\cla(\eps, h)w:=\eps^2 \p_{11}w+2\eps^2a_1\p_{12}w+(\eps^2
a_1^2+a_2^2)\p_{22}w+(\eps^2\p_1a_1+\eps^2a_1\p_2a_1+a_2\p_2a_2)\p_2w
\end{equation}
to obtain
\begin{align*}
\p_{22}w&=\frac{
f_0+\p_{i,\eps}f_i-\eps^2\p_{11}w-2\eps^2a_1\p_{12}w-(\eps^2\p_1a_1
+\eps^2a_1\p_2a_1+a_2\p_2a_2)\p_2w}{\eps^2a_1^2+a_2^2},
\end{align*}
and  see that
\begin{equation}\label{eq:G2}
\|\p_{22}w\|_{t-1}^\0\leq C\left(\sum_{i=0}^2
\|f_i\|_{t}^\0+\|\p_1w\|_{t,\eps}^\0+\|w\|_{t,\eps}^\0\right).
\end{equation}
Combining \eqref{eq:G1}, \eqref{eq:G2},  the induction assumptions, and the relation 
\[
\|w\|_{t+1,\eps}^\0\leq
C\left(\|w\|_{t,\eps}^\0+\|\p_1w\|_{t,\eps}^\0+\|\p_{22}w\|_{t-1}^\0\right)
\]
yields the desired estimate for $\|w\|_{t+1,\eps}^\0.$ 
This completes the proof.
\end{proof}

Using this result we can additionally show that 
then
\begin{equation}\label{e:G+H-dom}
\|\partial_2w(\eps,h)\{f\}\|^{\0}_{s_0-1/2}\leq
C\eps^2\|f\|_{s_0+3/2}.
\end{equation}
(Note that this involves a higher norm of $f$, but the constant involved in
the estimate is of order $\eps^2$.)

To show this, let $\phi\in H^{s-2}(\0)$ be the extension of $f$ given by 
$\phi(x,y)=f(x)$ and define $z:=w(\eps,h)\{f\}-\phi$. Then

\begin{equation*}
\left\{
\begin{array}{rlllll}
-D_iD_iz&=&\eps^2\partial_{11}\phi& \text{in} &\0,\\[1ex]
z&=&0& \text{on} & \p\0,
\end{array}
\right.
\end{equation*}
and by the unscaled trace inequality and $\eqref{G}$ with $t=s_0+1/2$ we get
\begin{equation*}
\|\p_2w\|^{\0}_{s_0-1/2}= \|\p_2z\|^{\0}_{s_0-1/2}
\leq C\|z\|_{s_0+1/2,\eps}^{\0}\leq C\eps^2\|\p_{11}\phi\|_{s_0-1/2}^\0
\end{equation*}
and therefore \eqref{e:G+H-dom}.
In particular, this implies by the unscaled trace estimate
\begin{equation}\label{e:G+H}
\|\partial_2w(\eps,h)\{f\}\|^{\p\0}_{s_0-1}\leq
C\eps^2\|f\|_{s_0+3/2}.
\end{equation}

Next, we prove a coercivity estimate for the scaled
Dirichlet-Neumann   operator $F(\eps,h),$ $(\eps,h)\in (0,\eps_0)\times\clu_s,$
which will be a key point in the proof of Theorem \ref{thm2}. 
Given $\varphi\in H^{1/2}(\SSM)$ and $\eps>0,$ we set
\[
\|\varphi\|_{1/2, \eps}:=\|\varphi\|_0+\eps^{1/2}\|\varphi\|_{1/2}.
\]

\begin{lemma}\label{L:ana}
There exists a positive constant $c$ such that for all $(\eps,h)\in
(0,\eps_0)\times\clu_s$  and $\varphi\in H^{3/2}(\SSM)$
which satisfy
\begin{align*}
\int_\SSM\varphi\, dx=0
\end{align*}
we have 
\begin{equation}\label{eq:ana}
\langle F(\eps,h)\{\varphi\}|\varphi\rangle_{L^2(\SSM)}\geq
c\|\varphi\|^2_{1/2,\eps}.
\end{equation}
\end{lemma}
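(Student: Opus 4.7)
The strategy is threefold: (i) turn $\langle F(\eps,h)\{\varphi\}\,|\,\varphi\rangle_{L^2(\SSM)}$ into a scaled Dirichlet integral of $w:=w(\eps,h)\{\varphi\}$ on $\0$ via \eqref{IP}, (ii) bound this integral below by $(\|\nabla_\eps w\|_0^\0)^2$, and (iii) recover $\|\varphi\|_{1/2,\eps}$ from $\|\nabla_\eps w\|_0^\0$ via the scaled Poincar\'e and trace inequalities of Lemma~\ref{L:Princ}. The zero-mean hypothesis on $\varphi$ is needed precisely to unlock Poincar\'e in step~(iii).

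For step~(i), I would multiply $-D_iD_iw=0$ by $w/a_2$, integrate over $\0$, and apply \eqref{IP} to shift one derivative onto $w$, arriving at
\begin{equation*}
\int_\0\frac{1}{a_2}\,D_iw\,D_iw\,dx \;=\; \int_{\Gamma_+}\frac{a_{i,\eps}}{a_2}\,\varphi\,D_iw\,d\sigma \;-\; \int_{\Gamma_-}\frac{a_{i,\eps}}{a_2}\,\varphi\,D_iw\,d\sigma.
\end{equation*}
Since $\wt h$ is even in $y$, $a_1$ and $a_{1,\eps}/a_2$ are odd while $a_2$ is even; a uniqueness argument (as for $u$ in \eqref{mbp0}) shows $w$ is even in $y$, so $D_1w$ is even and $D_2w$ is odd, and each boundary integrand is odd in $y$. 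The two boundary integrals therefore add up to twice the $\Gamma_+$ contribution, which by the definitions of $\clb(\eps,h)$ and $F(\eps,h)$ equals $2\eps^2\langle F(\eps,h)\{\varphi\}\,|\,\varphi\rangle_{L^2(\SSM)}$, yielding
\[\langle F(\eps,h)\{\varphi\}\,|\,\varphi\rangle_{L^2(\SSM)}\;=\;\frac{1}{2\eps^2}\int_\0\frac{1}{a_2}\,D_iw\,D_iw\,dx.\]

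For step~(ii), $1/a_2=\p_2(y\wt h)$ stays bounded below by a positive constant uniformly on $\clu_s$ (cf.\ Lemma~\ref{L:esth}(ii)), and from
\[D_iw\,D_iw\;=\;\eps^2|\p_1w+a_1\p_2w|^2+a_2^2|\p_2w|^2\;\geq\;\tfrac{\eps^2}{2}|\p_1w|^2+(a_2^2-\eps^2a_1^2)|\p_2w|^2\]
one gets, for $\eps$ small enough uniformly in $h\in\clu_s$, the pointwise bound $D_iw\,D_iw\geq c|\nabla_\eps w|^2$. Hence $\langle F(\eps,h)\{\varphi\}\,|\,\varphi\rangle_{L^2(\SSM)}\geq (c/\eps^2)(\|\nabla_\eps w\|_0^\0)^2$.

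For step~(iii), the assumption $\int_\SSM\varphi\,dx=0$ translates via $w|_{\Gamma_+}=\varphi\circ\phi_+$ into $\int_{\Gamma_+}w\,d\sigma=0$, so \eqref{eq:Poin2} yields $\|w\|_0^\0\leq C\eps^{-1}\|\nabla_\eps w\|_0^\0$. Since $\|\p_2w\|_0^\0\leq\|\nabla_\eps w\|_0^\0$ and $\eps\|w\|_1^\0\leq C(\eps\|w\|_0^\0+\|\nabla_\eps w\|_0^\0)\leq C\|\nabla_\eps w\|_0^\0$, the definition of $\|\cdot\|_{1,\eps}^\0$ gives $\|w\|_{1,\eps}^\0\leq C\eps^{-1}\|\nabla_\eps w\|_0^\0$, and the scaled trace inequality \eqref{eq:scest} at $t=0$ applied to $w|_{\p\0}=\varphi\circ\phi_\pm$ delivers $\|\varphi\|_{1/2,\eps}\leq C\|w\|_{1,\eps}^\0\leq C\eps^{-1}\|\nabla_\eps w\|_0^\0$; combining with step~(ii) proves \eqref{eq:ana}. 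The most delicate point is the parity bookkeeping in step~(i), which makes $F$ appear as a non-negative symmetric quadratic form in the first place; thereafter the $\eps$-powers balance tightly, with the $\eps^{-2}$ cost of the scaled Poincar\'e inequality exactly canceling the $\eps^{-2}$ built into the definition of $\clb$.
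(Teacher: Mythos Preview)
Your proof is correct and follows essentially the same route as the paper's: both use the integration-by-parts formula \eqref{IP} together with the evenness of $w$ to identify $\langle F(\eps,h)\{\varphi\}\,|\,\varphi\rangle_{L^2(\SSM)}$ with $\tfrac{1}{2}\eps^{-2}\int_\0 a_2^{-1}D_iwD_iw\,dx$, bound this below by $c\eps^{-2}(\|\nabla_\eps w\|_0^\0)^2$, and then apply the Poincar\'e inequality \eqref{eq:Poin2} and the scaled trace inequality \eqref{eq:scest}. Your parity bookkeeping and the explicit balancing of $\eps$-powers in step~(iii) spell out details that the paper leaves implicit, but there is no substantive difference in the argument.
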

\begin{proof} 
Let $w:=w(\eps,h)\{\varphi\}\in H^2(\0)$, recall the definition of 
$\tilde\clb(\eps,h)w$ from \eqref{deftb} and observe that due to symmetry
\[\tilde\clb(\eps,h)w(x,1)=\tilde\clb(\eps,h)w(x,-1)\qquad x\in\SSM.\]

Using \eqref{IP}, we have
\begin{align*}
&\langle F(\eps,h)\{\varphi\}|\varphi\rangle_{L^2(\SSM)} =\int_{\G_+} w
\tilde\clb(\eps,h)w\, d\sigma=\eps^{-2}\int_{\G_+} \frac{a_{i,\eps}}{a_2}wD_iw\,
d\sigma\\[1ex]
=&\frac{\eps^{-2}}{2}\int_{\G_+}\frac{a_{i,\eps}}{a_2}wD_iw \,
d\sigma-\frac{\eps^{-2}}{2}\int_{\G_-}\frac{a_{i,\eps}}{a_2}wD_iw \,
d\sigma
=\frac{\eps^{-2}}{2}\int_{\0}\frac{1}{a_2}D_i wD_iw\, dx\\
 \geq&
c\eps^{-2}\int_{\0}|\nabla_\eps w|^2\, dx=c\eps^{-2}\left(\|\nabla_\eps
w\|_0^\0\right)^2,
\end{align*} 
cf. Proposition \ref{P:1}.
From Poincar\'e's  inequality \eqref{eq:Poin2}  together
with \eqref{eq:scest} we obtain the desired estimate.
\end{proof}

Now we prove estimates for the Fr\'echet derivatives of the solution
$w=w(\eps,h)\{f\}$ of \eqref{eq:DPDP} with respect to $h$.
The results established in Proposition \ref{P:1} will be used as basis for
an induction argument.

\begin{prop}\label{P:2} 
Given $k\in\NNM$,  $h_1,\ldots, h_k\in H^{s}(\SSM),$ and $f\in H^{s-2}(\SSM)$,
the Fr\'echet derivative $w^{(k)}:=w^{(k)}(\eps,h)[h_1,\ldots,h_k]\{f\}$
satisfies 
\begin{equation}\label{C}
\|w^{(k)}\|_{t,\eps}^\0\leq C\|h_1\|_{s_0}\ldots 
\|h_k\|_{s_0}\|f\|_{t-1/2}.
\end{equation} 
for all integer $t\in[1,s_0-1/2]$. Additionally,
\begin{equation}\label{estd2der}
\|\p_2 w^{(k)}\|^{\p\0}_{-1/2} \leq
C\|h_1\|_{s_0}\ldots \|h_k\|_{s_0}\|f\|_{1/2}.
\end{equation}
The constant $C$ depends only on $k$, $s_0$, and $\clu_s$. 
\end{prop}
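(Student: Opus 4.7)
The plan is to proceed by induction on $k$, using Proposition \ref{P:1} both as the base case and as the main estimation tool at each step. A crucial observation throughout is that the boundary datum $f \circ \phi_\pm$ in \eqref{eq:DPDP} is independent of $h$, so every Fréchet derivative $w^{(k)}$ satisfies homogeneous Dirichlet conditions on $\partial\Omega$. For $k=0$, applying \eqref{G} with $f_0 = f_i = 0$ gives $\|w\|_{t,\eps}^\Omega \leq C(\|f\|_{t-1} + \eps^{1/2}\|f\|_{t-1/2}) \leq C\|f\|_{t-1/2}$, which is the desired bound.

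For the inductive step, differentiate the PDE $-D_iD_i w = 0$ (recall the expanded form \eqref{repA}) $k$ times with respect to $h$ in the directions $h_1, \ldots, h_k$. By the multilinear Leibniz rule, $w^{(k)}$ solves
\[
-D_iD_i w^{(k)} = \sum_{\substack{I \sqcup J = \{1, \ldots, k\} \\ |J| < k}} \clr_{|I|}(\eps, h)[h_I]\, w^{(|J|)}[h_J],
\]
with $w^{(k)} = 0$ on $\partial\Omega$, where each $\clr_{|I|}(\eps, h)[h_I]$ is a second-order differential operator in its rightmost argument whose coefficients are multilinear in $\tilde h_i$, $i \in I$, and their first derivatives, with smooth dependence on $\tilde h$, $\partial_1\tilde h$, $\partial_2\tilde h$. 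By Lemma \ref{L:esth} and the product estimate in $H^{s_0-1/2}(\Omega)$ (applicable since $s_0 - 1/2 > 1$), these coefficients are bounded in $H^{s_0 - 1/2}(\Omega)$ by $\prod_{i \in I} \|h_i\|_{s_0}$.

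The next step is to rewrite each summand on the right-hand side in the divergence form $f_0 + \partial_{i,\eps} f_i$ required by Proposition \ref{P:1}. This uses the structure identity $\partial_1(1/a_2) + \partial_2(a_1/a_2) = 0$ that underlies \eqref{IP}, together with the explicit form of $\clr_{|I|}$; the resulting $f_0, f_1, f_2$ are sums of products of coefficient factors (controlled as above) and first-order derivatives of $w^{(|J|)}$ (controlled by $\|w^{(|J|)}\|_{t,\eps}^\Omega$ via the inductive hypothesis). Applying \eqref{G} with vanishing boundary data then closes the induction for \eqref{C}. The analogous argument using \eqref{F} in the final step yields \eqref{estd2der}, once it is observed that the $f_2$ component admits the required $H^{-1/2}(\partial\Omega)$ bound by the scaled trace inequality \eqref{eq:scest}.

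The main obstacle is the bookkeeping of the divergence-form rewriting: one has to verify that the rearrangement of $\clr_{|I|}(\eps,h)[h_I]\, w^{(|J|)}$ into canonical form introduces no inverse powers of $\eps$, that the derivatives of the coefficient functions which appear do not exceed the regularity budget of $H^{s_0-1/2}(\Omega)$, and that each $f_i$ ends up controlled by the stated product of norms. Once this rewriting is performed carefully for the single second-order operator $\clr_{|I|}$, the induction is mechanical, since it only couples $w^{(k)}$ to $w^{(|J|)}$ for $|J| < k$.
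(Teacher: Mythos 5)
Your overall strategy matches the paper's: induction on $k$, differentiating the Dirichlet problem \eqref{eq:DPDP} $k$ times, rewriting the resulting right-hand side in the divergence form $f_0+\p_{i,\eps}f_i$ and then invoking Proposition~\ref{P:1}. However, the step where you claim the boundary estimate \eqref{estd2der} is obtained ``once it is observed that the $f_2$ component admits the required $H^{-1/2}(\p\0)$ bound by the scaled trace inequality \eqref{eq:scest}'' contains a genuine gap. The component $F_2$ is (schematically) a sum of products $\alpha_{l2}\,\p_2 w^{(k-l)}$, and passing from a domain norm of $F_2$ to a boundary norm via \eqref{eq:scest} costs a derivative on $w^{(k-l)}$: concretely, you would need $\|F_2\|_{1,\eps}^\0$, which involves $\p_{22}w^{(k-l)}$ and is only controlled by $\|w^{(k-l)}\|_{2,\eps}^\0\le C\prod\|h_i\|_{s_0}\|f\|_{3/2}$, whereas \eqref{estd2der} demands the weaker factor $\|f\|_{1/2}$. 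This derivative loss would accumulate in the induction, and the estimate would not close. The paper instead estimates $\|\alpha_{l2}\,\p_2 w^{(k-l)}\|_{-1/2}^{\p\0}$ directly on $\p\0$ by the product estimate $\|z_1z_2\|_{-1/2}^{\p\0}\le C\|z_1\|_{s_0-1}^{\p\0}\|z_2\|_{-1/2}^{\p\0}$ combined with the \emph{boundary} induction hypothesis $\|\p_2 w^{(k-l)}\|_{-1/2}^{\p\0}\le C\prod\|h_j\|_{s_0}\|f\|_{1/2}$, which avoids any loss. This is precisely why \eqref{estd2der} has to be carried along in the induction together with \eqref{C}.

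There is also a smaller inaccuracy: you state that the Fr\'echet-differentiated coefficients are multilinear in the $\tilde h_i$ ``and their first derivatives'' and that they can be bounded in $H^{s_0-1/2}(\0)$ by $\prod\|h_i\|_{s_0}$. This is true for the coefficients that land in $F_1$ and $F_2$, but the coefficient of $\p_2w$ in the expanded representation \eqref{repA} already involves $\p_j a_i$, i.e.\ second derivatives of $\widetilde h$, so after Fr\'echet differentiation the $F_0$-coefficients carry up to two derivatives of the $\tilde h_i$. They must therefore be measured in $H^{s_0-3/2}(\0)$ (not $H^{s_0-1/2}(\0)$) to be bounded by $\prod\|h_i\|_{s_0}$; this is fine for the product estimate in $H^{t-1}(\0)$ since $s_0-3/2\ge 2>\dim\0/2$, but the index you quote is not attainable at the claimed cost.
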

\begin{proof} 
We prove both estimates by induction over $k$. For $k=0$ they hold due to
Proposition \ref{P:1}. Assume now \eqref{C}, \eqref{estd2der} for all Fr\'echet
derivatives up to order $k$. 
Differentiating \eqref{eq:DPDP}   $(k+1)-$times with respect to $h$, yields that
$w^{(k+1)}$ is the solution of
\begin{equation}\label{eq:DGF}
\left\{
\begin{array}{rlllll}
-D_iD_iw^{(k+1)}&=&\sum\limits_{\sigma\in
S_{k+1}}\sum\limits_{l=0}^{k}C_l\p_h^{l+1}  \cla(\eps,h)[h_{\sigma(1)},\ldots
h_{\sigma(l+1)}]w^{(k-l)}& \text{in} \ \0,\\[1ex]
w^{(k+1)}&=&0&\text{on} \ \p\0,
\end{array}
\right.
\end{equation}
where
$w^{(k-l)}=w^{(k-l)}(\eps,h)[h_{\sigma(l+2)},\ldots,h_{\sigma(k+1)}]\{f\}$
 and $S_{k+1}$ is the set of permutations of $\{1,\ldots,k+1\}$.

We are going to define functions $F_i$ in $\Omega$ such that the right hand side
in \eqref{eq:DGF}$_1$ can be written as
\[\sum\limits_{\sigma\in
S_{k+1}}\sum\limits_{l=0}^{k}C_l\p_h^{l+1}  \cla(\eps,h)[h_{\sigma(1)},\ldots
h_{\sigma(l+1)}]w^{(k-l)}=F_0+\p_{i,\eps}F_i.\]
The functions $F_i$ are sums of terms to be specified below. For this purpose,
we recall \eqref{repA} and consider the Fr\'echet derivatives of the ocurring
terms separately.
\begin{itemize}
\item[(i)] When differentiating $\eps^2 \p_{22}w$ we do not obtain  any term  on
the right hand side of the first equation of \eqref{eq:DGF}.
\item[(ii)] The terms on the right hand side of the first equation  of 
\eqref{eq:DGF} which are obtained by differentiating  $2\eps^2a_1\p_{12}w$   may
be written as follows:
\[
\eps^2 a_1^{(l+1)}\p_{12}w^{(k-l)}
=\p_{1,\eps}\left[\eps  a_1^{(l+1)}\p_2w^{(k-l)}\right]
-\p_{1}\left(\eps^2  a_1^{(l+1)}\right)\p_2w^{(k-l)},
\]
where $a_1^{(l+1)}:=a_1^{(l+1)}(h)[h_{\sigma(1)},\ldots
h_{\sigma(l+1)}]$.
The last term belongs to $F_0,$ while the one in the square brackets belongs
 to  $F_{1}$.
\item[(iii)] When differentiating $(\eps^2 a_1^2+a_2^2)\p_{22}w$ we obtain terms
of the  form
\[
(\eps^2
a_1^2+a_2^2)^{(l+1)}\p_{22}w^{(k-l)}
=\p_{2,\eps}\left[  (\eps^2
a_1^2+a_2^2)^{(l+1)}\p_2w^{(k-l)}\right]
-\p_{2}  (\eps^2
a_1^2+a_2^2)^{(l+1)}\p_2w^{(k-l)},
\]
where $(\eps^2 a_1^2+a_2^2)^{(l+1)}:=\p_h^{l+1} (\eps^2
a_1^2+a_2^2)(h)[h_{\sigma(1)},\ldots h_{\sigma(l+1)}]$.
The last term belongs to $F_0$ while the expression  in the square brackets
belongs to  $F_{2}$.
\item[(iv)] All  terms corresponding to
$(\eps^2\p_1a_1+\eps^2a_1\p_2a_2+a_2\p_2a_2)\p_2w$ are absorbed by
$F_0$.
\end{itemize}
Summarizing, we get
\begin{equation}\label{eq:DGF2}
\left\{
\begin{array}{rlllll}
-D_iD_iw^{(k+1)}&=&F_0+\p_{i,\eps}F_i& \text{in} \ \0,\\[1ex]
w^{(k+1)}&=&0&\text{on} \ \p\0,
\end{array}
\right.
\end{equation}
where 
\[F_i=\sum_{\sigma\in
S_{k+1}}\sum_{l=0}^k\alpha_{li}[h_{\sigma(1)},\ldots,h_{\sigma(l+1)}]
\p_2w^{(k-l)}\]
and 
\[\alpha_{li}[H_1,\ldots,H_{l+1}]=\sum_{\sigma'\in
S_{l+1}}\beta_{li}(\eps,\widetilde h)\p^{\gamma_{li,1}}\widetilde{
H}_{\sigma'(1)}\ldots
\p^{\gamma_{li,l+1}}\widetilde{H}_{\sigma'(l+1)}\]
with smooth functions $\beta_{li}$ and $|\gamma_{l0,j}|\in\{0,1,2\}$, 
$|\gamma_{l1,j}|,|\gamma_{l2,j}|\in\{0,1\}$. 
Fixing $\sigma$ and $l$, writing
$\alpha_{li}:=\alpha_{li}[h_{\sigma(1)},\ldots,h_{\sigma(l+1)}]$
and using the induction assumption we estimate
\begin{align*}
\|\alpha_{li}\p_2w^{(k-l)}\|_{t-1}^\0
\leq C\|\alpha_{li}\|_{s_0-3/2}^\0\|\p_2w^{(k-l)}\|_{t-1}^\0
&\leq C\|\widetilde h_{\sigma(1)}\|^\0_{s_0+1/2}\ldots
\|\widetilde h_{\sigma(l+1)}\|^\0_{s_0+1/2}\|w^{(k-l)}\|^\0_{t,\eps}\\
&\leq C\|h_1\|_{s_0}\ldots\|h_{k+1}\|_{s_0}\|f\|_{t-1/2}.
\end{align*}
Thus
\begin{equation}\label{eq:Fies}
\|F_i\|_{t-1}^\0\leq
C\|h_1\|_{s_0}\ldots\|h_{k+1}\|_{s_0}\|f\|_{t-1/2},\qquad
i=0,1,2,
\end{equation}
and \eqref{C} (with $k$ replaced by $k+1$) follows from \eqref{G}.

Similarly, 
\begin{align*}
\|\alpha_{l2}\p_2w^{(k-l)}\|_{-1/2}^{\p\0}
&\leq C\|\alpha_{l2}\|_{s_0-1}^{\p\0}\|\p_2w^{(k-l)}\|_{-1/2}^{\p\0}
\leq C\|\alpha_{l2}\|_{s_0-1/2}^{\0}\|\p_2w^{(k-l)}\|_{-1/2}^{\p\0}\\
&\leq C \|\widetilde h_{\sigma(1)}\|^\0_{s_0+1/2}\ldots
\|\widetilde h_{\sigma(l+1)}\|^\0_{s_0+1/2}\|\p_2w^{(k-l)}\|_{-1/2}^{\p\0}\\
&\leq C\|h_1\|_{s_0}\ldots\|h_{k+1}\|_{s_0}\|f\|_{t-1/2}.
\end{align*}
Therefore
\[\|F_2\|_{-1/2}^{\p\0}\leq C
\|h_1\|_{s_0}\ldots\|h_{k+1}\|_{s_0}\|f\|_{1/2},\]
and \eqref{estd2der} (with $k$ replaced by $k+1$) follows from \eqref{eq:Fies}
with $t=1$ and \eqref{F}.
\end{proof}

 We prove now an estimate similar to \eqref{estd2der} which is  optimal
 with respect to one of the ``variations'' $h_k$ (say $h_1$). The price to
pay here is a stronger norm for $f$.

\begin{prop}\label{P:3}
Under the assumptions of Proposition {\rm\ref{P:2}} we additionally have 
\[\|\p_2w^{(k)}\|_{-1/2}\leq
C\|h_1\|_{3/2}\|h_2\|_{s_0}\ldots\|h_k\|_{s_0}\|f\|_{s_0-1}.\]
The constant $C$ depends only on $k$, $s_0$, and $\clu_s$.
\end{prop}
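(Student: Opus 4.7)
My plan is to prove the result by induction on $k$. Since Proposition~\ref{P:2} alone does not give a domain $L^2$-bound on $\p_2 w^{(k)}$ with $h_1$ in the weak norm $H^{3/2}$, and such a bound is essential to close the induction, I would strengthen the claim to the joint estimate
\[
\|\p_2 w^{(k)}\|_{-1/2}^{\p\Omega} + \|w^{(k)}\|_{1,\eps}^\Omega \leq C\|h_1\|_{3/2}\|h_2\|_{s_0}\cdots\|h_k\|_{s_0}\|f\|_{s_0-1}.
\]
The base case $k=0$ is a direct consequence of Proposition~\ref{P:1} (estimates \eqref{G} with $t=1$ and \eqref{F}) applied to $w(\eps,h)\{f\}$.

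For the inductive step, I would use the decomposition from the proof of Proposition~\ref{P:2}: $w^{(k)}$ solves $-D_iD_iw^{(k)} = F_0 + \p_{i,\eps}F_i$ in $\Omega$ with zero boundary data, where each $F_i$ is a finite sum of terms of the form $\alpha_{li}\cdot\p_2 w^{(k-1-l)}$, with $\alpha_{li}$ polynomial in $\p^\gamma\widetilde h_{\sigma(j)}$ for $j\leq l+1$ (multi-index $|\gamma|\leq 2$ for $i=0$, $|\gamma|\leq 1$ for $i=1,2$), and $w^{(k-1-l)}$ depending on the remaining variations. Applying Proposition~\ref{P:1} to the problem satisfied by $w^{(k)}$ reduces the inductive step to verifying
\[
\sum_{i=0}^2\|F_i\|_0^\Omega + \|F_2\|_{-1/2}^{\p\Omega} \leq C\|h_1\|_{3/2}\prod_{j=2}^{k}\|h_j\|_{s_0}\|f\|_{s_0-1}.
\]

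For each summand I distinguish whether $h_1$ appears in the coefficient $\alpha_{li}$ (Case~A) or inside the lower-order derivative $w^{(k-1-l)}$ (Case~B). In Case~A, the extension operator gives $\widetilde h_1 \in H^2(\Omega)$ with $\|\widetilde h_1\|_2^\Omega \leq C\|h_1\|_{3/2}$, hence $\p^\gamma\widetilde h_1 \in L^2(\Omega)$ for $|\gamma|\leq 2$, while the other factors $\p^{\gamma'}\widetilde h_j$ ($j\geq 2$, $|\gamma'|\leq 2$) lie in $L^\infty(\Omega)$ by Sobolev embedding since $s_0\geq 7/2$. Combined with $\|\p_2 w^{(k-1-l)}\|_\infty^\Omega \leq C\prod_{j\in w^{(k-1-l)}}\|h_j\|_{s_0}\|f\|_{s_0-1}$, which follows from Proposition~\ref{P:2} with $t=s_0-1/2$ plus Sobolev embedding, the $L^2$-product closes the domain part. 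For the boundary part, Lemma~\ref{L:esth} gives $\p_2\widetilde h_1|_{\p\Omega}=0$, so only $\widetilde h_1|_{\p\Omega}$ and $\p_1\widetilde h_1|_{\p\Omega}$ enter, both in $H^{1/2}(\p\Omega)$ with norm $\leq C\|h_1\|_{3/2}$; the 1D product estimate $\|z_1z_2\|_{-1/2}^{\p\Omega}\leq C\|z_1\|_{-1/2}^{\p\Omega}\|z_2\|_{s_0-2}^{\p\Omega}$ (using $s_0-2>1/2$) together with the trace of $\p_2 w^{(k-1-l)}$ from Proposition~\ref{P:2} concludes. In Case~B, the strengthened inductive hypothesis supplies both $\|w^{(k-1-l)}\|_{1,\eps}^\Omega$ and $\|\p_2 w^{(k-1-l)}\|_{-1/2}^{\p\Omega}$ with the correct structure, while $\alpha_{li}$ depends only on the strong-norm variations and is estimated in $L^\infty(\Omega)$ and $H^{s_0-1}(\p\Omega)$ by Sobolev embeddings.

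The main obstacle is precisely Case~B: the domain $L^2$-bound on $\p_2 w^{(k-1-l)}$ with $h_1$ in the weak norm is not available from Proposition~\ref{P:2}, and this is what forces the strengthening of the inductive hypothesis described above. Once both parts of the joint estimate are propagated together, the induction closes without further difficulty.
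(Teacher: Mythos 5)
Your proof is correct and follows essentially the same route as the paper: the paper likewise strengthens the inductive claim to $\|w^{(k)}\|_{1,\eps}^\Omega+\|\p_2w^{(k)}\|_{-1/2}\leq C\|h_1\|_{3/2}\|h_2\|_{s_0}\cdots\|h_k\|_{s_0}\|f\|_{s_0-1}$, inducts on $k$ using the decomposition of $-D_iD_iw^{(k+1)}=F_0+\p_{i,\eps}F_i$ from the proof of Proposition~\ref{P:2}, and splits into the same two cases depending on whether $h_1$ sits in the coefficient $\alpha_{li}$ or inside the lower-order derivative. The only cosmetic difference is in your Case~A boundary estimate, where you invoke $\p_2\widetilde h_1|_{\p\Omega}=0$; the paper achieves the same bound more directly via the trace inequality $\|\alpha_{l2}\|_{1/2}^{\p\Omega}\leq C\|\alpha_{l2}\|_{1}^{\Omega}$ together with $\|\widetilde h_1\|_2^\Omega\leq C\|h_1\|_{3/2}$, without needing that particular structural fact.
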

\begin{proof}
We show the more general estimate
\begin{equation}\label{estind2}
\|w^{(k)}\|^\0_{1,\eps}+\|\p_2w^{(k)}\|_{-1/2}\leq
C\|h_1\|_{3/2}\|h_2\|_{s_0}\ldots\|h_k\|_{s_0}\|f\|_{s_0-1}
\end{equation}
by induction over $k$. For $k=0$ the statement is contained in Proposition
\ref{P:1}. Assume now \eqref{estind2} for all derivatives up to some order
$k$. We proceed as in the proof of Proposition \ref{P:2}, reconsider
problem \eqref{eq:DGF2} and have to show now 
\[\|F_i\|_0^\0,\;\|F_2\|^{\p\0}_{-1/2}\leq C
\|h_1\|_{3/2}\|h_2\|_{s_0}\ldots\|h_k\|_{s_0}\|f\|_{s_0-1}.\]
For this purpose, we fix $\sigma$ and $l$ and estimate 
\[\|\alpha_{li}\p_2w^{(k-l)}\|_0^\0,\;\|\alpha_{l2}\p_2w^{(k-l)}\|^{\p\0}_{-1/2}
.
\]
We have to distinguish two cases, depending on whether the argument $h_1$
occurs in the first or in the second factor.

Case 1: $\sigma^{-1}(1)\leq l+1$. 
Using Proposition \ref{P:2} with $t=s_0-1/2$ we estimate
\begin{align*}
\|\alpha_{li}\p_2w^{(k-l)}\|_0^\0&\leq
C\|\alpha_{li}\|_0^\0\|\p_2w^{(k-l)}\|_{s_0-3/2}^\0
\leq C\|\widetilde h_1\|_2^\0
\prod
\|\widetilde h_{\sigma(j)}\|_{s_0+1/2}^\0\,\|w^{(k-l)}\|^\0_{s_0-1/2,\eps}\\
&\leq C\|h_1\|_{3/2}\|h_2\|_{s_0}\ldots\|h_k\|_{s_0}\|f\|_{s_0-1}.
\end{align*}
The product is taken over $j\in\{1,\ldots,l+1\}\setminus\{\sigma^{-1}(1)\}$.
Similarly,
\begin{align*}
&\|\alpha_{l2}\p_2w^{(k-l)}\|_{-1/2}^{\p\0}\leq C
\|\alpha_{l2}\|_{1/2}^{\p\0}\|\p_2w^{(k-l)}\|_{s_0-2}^{\p\0}\leq C
\|\alpha_{l2}\|_{1}^{\0}\|\p_2w^{(k-l)}\|_{s_0-3/2}^{\0}\\
\leq& C\|\widetilde h_1\|_2^\0
\prod
\|\widetilde h_{\sigma(j)}\|_{s_0+1/2}^\0\,\|w^{(k-l)}\|^\0_{s_0-1/2,\eps}
\leq C\|h_1\|_{3/2}\|h_2\|_{s_0}\ldots\|h_k\|_{s_0}\|f\|_{s_0-1}.
\end{align*}

Case 2: $\sigma^{-1}(1)\leq l+1$. We apply the induction assumption and
estimate 
\begin{align*}
\|\alpha_{li}\p_2w^{(k-l)}\|_0^\0&\leq
C\|\alpha_{li}\|^\0_{s_0-3/2}\|\p_2w^{(k-l)}\|^\0_0
\leq C\|\widetilde h_{\sigma(1)}\|^\0_{s_0+1/2}\ldots
\|\widetilde h_{\sigma(l+1)}\|^\0_{s_0+1/2} \|w^{(k-l)}\|^\0_{1,\eps}\\
&\leq C\|h_1\|_{3/2}\|h_2\|_{s_0}\ldots\|h_k\|_{s_0}\|f\|_{s_0-1}.
\end{align*}
Similarly,
\begin{align*}
&\|\alpha_{l2}\p_2w^{(k-l)}\|_{-1/2}^{\p\0}\leq C
\|\alpha_{l2}\|_{s_0-2}^{\p\0}\|\p_2w^{(k-l)}\|_{-1/2}^{\p\0}
\leq C\|\alpha_{l2}\|_{s_0-3/2}^{\0}\|\p_2w^{(k-l)}\|_{-1/2}^{\p\0}\\
\leq& C\|\widetilde h_{\sigma(1)}\|^\0_{s_0+1/2}\ldots
\|\widetilde h_{\sigma(l+1)}\|^\0_{s_0+1/2} \|w^{(k-l)}\|^\0_{1,\eps}
\leq C\|h_1\|_{3/2}\|h_2\|_{s_0}\ldots\|h_k\|_{s_0}\|f\|_{s_0-1}.
\end{align*}
The proof is completed now by carrying out the summations over $\sigma$ and
$l$ and applying Proposition \ref{P:1} to \eqref{eq:DGF2}.
\end{proof}

We recall
(cf. \eqref{defF})
\[
F(\eps,h)\{f\}=\frac{1}{h}(\eps^{-2}+{h'}^2)(\p_2w(\eps,h)\{f\})|_{\Gamma_+}
\circ\phi_+^{-1} -h'f'.
\]
Applying the product rule of differentiation and product estimates as above we
find from this and Propositions \ref{P:2} and \ref{P:3} 
\begin{align}\label{eq:cru1}
 \|F^{(m)}(\eps,h)[h_1,\ldots h_m]\{f\}\|_{-1/2} &\leq
C\eps^{-2}\|h_1\|_{s_0}\ldots\|h_k\|_{s_0}\|f\|_{1/2},\\[1ex]
 \label{eq:cru2}
  \|F^{(m)}(\eps,h)[h_1,\ldots h_m]\{f\}\|_{-1/2} &\leq
C\eps^{-2}\|h_1\|_{3/2}\ldots\|h_k\|_{s_0}\|f\|_{s_0-1}
\end{align}
for all $m\in\NNM,$ $(\eps,h)\in(0,\eps_0)\times\clu_s,$ $f\in H^{s-2}(\SSM)$ , 
and $h_1,\ldots h_m\in H^{s}(\SSM).$ Additionally, using \eqref{e:G+H},
\[
\|F(\eps,h)\{f\}\|_{s_0-1} \leq C\|f\|_{s_0+3/2}. 
\]
In particular, we have
\begin{equation}\label{eq:Boun}
\|\clf(\eps,h)\|_{s_0-1}\leq C\|h\|_{s_0+7/2}\leq C,
\qquad(\eps,h)\in(0,\eps_0)\times \clu_s.
\end{equation}
The constants depend only upon ${\mathcal U}_s$, $s_0$, and $m$.

Moreover, we obtain:
\begin{lemma}\label{L:32} 
Given  $h_1\in H^{s}(\SSM)$ and $f\in H^{s-2}(\SSM)$,  we have
\begin{equation}\label{CCC}
\|F'(\eps,h)[h_1]\{f\}\|_{-1/2}\leq C\|h_1\|_{3/2}\|f\|_{s_0+3/2}.
\end{equation}
\end{lemma}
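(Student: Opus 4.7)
The plan is to differentiate the explicit formula
\[
F(\eps,h)\{f\}=\frac{1}{h}(\eps^{-2}+{h'}^2)(\p_2 w(\eps,h)\{f\})|_{\Gamma_+}\circ\phi_+^{-1}-h'f'
\]
by the product rule and to control each of the resulting summands in $H^{-1/2}(\SSM)$ by $C\|h_1\|_{3/2}\|f\|_{s_0+3/2}$. The summands split naturally into two families: those in which the prefactor is differentiated while $\p_2 w\{f\}$ remains, and those in which $w$ itself is differentiated, producing $\p_2 w'(\eps,h)[h_1]\{f\}$. Each family contains one term with the dangerous $\eps^{-2}$ and one with the harmless ${h'}^2$, and there is the extra term $-h_1'f'$.

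For the summands carrying $\p_2 w\{f\}$, I would invoke \eqref{e:G+H}, which gives the crucial gain $\|\p_2 w\|^{\p\Omega}_{s_0-1}\leq C\eps^2\|f\|_{s_0+3/2}$. Combined with the product inequality $\|uv\|^{\SSM}_{-1/2}\leq C\|u\|^{\SSM}_{-1/2}\|v\|^{\SSM}_{s_0-1}$ (valid since $s_0-1>1/2$) and $\|h_1'\|_{-1/2}\leq\|h_1\|_{1/2}\leq\|h_1\|_{3/2}$, the $\eps^2$ exactly absorbs the $\eps^{-2}$ and each of these terms is bounded as required. The summand $({h'}^2/h)(\p_2 w'[h_1])|_{\Gamma_+}$, free of $\eps^{-2}$, is handled directly through Proposition \ref{P:3}.

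The main obstacle is the term $(\eps^{-2}/h)(\p_2 w'[h_1])|_{\Gamma_+}$: Proposition \ref{P:3} would leave the $\eps^{-2}$ uncompensated, so I would instead establish the sharper auxiliary bound
\begin{equation}\label{aux}
\|\p_2 w'(\eps,h)[h_1]\{f\}\|^{\p\Omega}_{-1/2}\leq C\eps^2\|h_1\|_{3/2}\|f\|_{s_0+3/2}.
\end{equation}
The key idea, mirroring the derivation of \eqref{e:G+H}, is to introduce the trivial $y$-independent extension $\phi(x,y):=f(x)$. Since $\p_2\phi=0$, a direct computation yields $\cla(\eps,h)\phi=\eps^2 f''$, which is independent of $h$; therefore $(\p_h\cla)(\eps,h)[h_1]\phi=0$. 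Setting $z:=w(\eps,h)\{f\}-\phi$, we have $z|_{\p\Omega}=0$ and $-D_iD_iz=\eps^2 f''$, so \eqref{G} with $t=s_0+1/2$ gives the small bound $\|z\|_{s_0+1/2,\eps}^{\Omega}\leq C\eps^2\|f\|_{s_0+3/2}$.

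Differentiating the PDE for $z$ in $h$ and invoking the identity $(\p_h\cla)[h_1]\phi=0$, the derivative $w'[h_1]=z'[h_1]$ solves
\[
-D_iD_iw'=(\p_h\cla)(\eps,h)[h_1]z\quad\text{in }\Omega,\qquad w'=0\quad\text{on }\p\Omega.
\]
Rewriting the right-hand side as $F_0+\p_{i,\eps}F_i$ exactly as in the proof of Proposition \ref{P:2}, each $F_i$ has the form $\alpha_{0i}[h_1]\p_2 z$ with multipliers $\alpha_{0i}$ involving at most two derivatives of $\widetilde h_1$. Standard product estimates, together with $\|\widetilde h_1\|_2^{\Omega}\leq C\|h_1\|_{3/2}$ and the above bound on $z$, yield $\sum_{i=0}^2\|F_i\|_0^{\Omega}+\|F_2\|_{-1/2}^{\p\Omega}\leq C\eps^2\|h_1\|_{3/2}\|f\|_{s_0+3/2}$. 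Applying \eqref{F} to $w'$ with zero boundary data then delivers \eqref{aux}, and inserting this back cancels the remaining $\eps^{-2}$, completing the proof. The truly nontrivial step is spotting the vanishing identity $(\p_h\cla)[h_1]\phi=0$, without which the $\eps^{-2}$ singularity could not be absorbed.
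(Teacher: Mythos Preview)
Your proposal is correct and takes essentially the same approach as the paper. The ``vanishing identity'' $(\p_h\cla)[h_1]\phi=0$ you single out is just a restatement of $\p_2\phi=0$, which is precisely why the right-hand side functions $f_i$ in the paper's argument involve only $\p_2 w\ (=\p_2 z)$ and hence inherit the $\eps^2$ gain from \eqref{e:G+H-dom}; the paper simply cites \eqref{e:G+H-dom} directly and applies \eqref{G} with $t=2$ to obtain the slightly stronger bound $\|\p_2 w'\|_{1/2}\leq C\eps^2\|h_1\|_{3/2}\|f\|_{s_0+3/2}$, skipping the explicit product-rule decomposition of $F'$.
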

\begin{proof} For brevity we write $w':=w'(\eps,h)\{f\}$. 
Differentiating \eqref{eq:DPDP} with respect to $h$ yields 
\begin{equation*}
\left\{
\begin{array}{rllllll}
-D_iD_i w'&=&f_0+\p_{i,\eps}f_i\quad \text{in} \ \0,\\[1ex]
 w'&=& 0\quad \text{on} \ \p\0,
\end{array}
\right.
\end{equation*}
where 
\begin{align*}
f_0:=&\eps^2\p_ha_1[h_1]\p_{12}w+2^{-1}\p_h(\eps^2 a_1^2+a_2^2)[h_1]\p_{22}w,\\
f_1:=&\eps\p_ha_1[h_1]\p_2w,\\
f_2:=&2^{-1}\p_h(\eps^2 a_1^2+a_2^2)[h_1]\p_2w,\\
w:=&w(\eps,h)\{f\}.
\end{align*}
By \eqref{G} and \eqref{e:G+H-dom} we have
\begin{align*}
\|\p_2 w'\|_{1/2}\leq&\| w'\|_{2,\eps}^\0\leq C\sum_{i=0}^2\|f_i\|_1^\0\leq
C\|h_1\|_{3/2}\|\p_2 w\|_{3}^{\Omega}
\leq C\eps^2\|h_1\|_{3/2}\|f\|_{s_0+3/2}.
\end{align*}
The result follows easily from this. 
\end{proof}

Next we give an estimate for the remainder term that occurs when curvature
differences are linearized.

\begin{lemma}\label{L:5} Let  $\eps\in(0,1)$ and $h,\overline h\in\clu_s\cap
H^{s+3/2}(\SSM)$.
Then
\begin{equation}\label{eq:curvature} 
\left\|\p_x^{s-1}\left(\kappa(\eps,h)-\kappa(\eps,\ov h)\right)-
\kappa'(\eps,h)[(h-\overline h)^{(s-1)}]\right\|_{1/2}\leq
C(1+\|\overline h\|_{s+3/2})\|h-\overline h\|_{s+1/2}.
 \end{equation}
The constant $C$ depends only on ${\mathcal U}_s$.
\end{lemma}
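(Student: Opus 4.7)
The plan is to exploit the identity $\kappa(\eps,h)=\p_x g(h')$ with $g(u)=u/\sqrt{1+\eps^2 u^2}$, which yields $\p_x^{s-1}\kappa(\eps,h)=\p_x^s g(h')$. Applying Faà di Bruno (or equivalently expanding $\p_x^{s-1}(g'(h')h'')$ by Leibniz and collecting the $h^{(s)}$-contributions from $k=1$ and from the top of $k=s-1$) gives the clean expansion
\[
\p_x^{s-1}\kappa(\eps,h)=g'(h')h^{(s+1)}+s\,g''(h')h''h^{(s)}+Q_s(h',\ldots,h^{(s-1)}),
\]
where $Q_s$ is a polynomial in $h',\ldots,h^{(s-1)}$ with smooth $h'$-dependent coefficients. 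Since the linearization has divergence form $\kappa'(\eps,h)[\psi]=\p_x(g'(h')\psi')=g'(h')\psi''+g''(h')h''\psi'$, one has $\kappa'(\eps,h)[(h-\ov h)^{(s-1)}]=g'(h')(h-\ov h)^{(s+1)}+g''(h')h''(h-\ov h)^{(s)}$.

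Subtracting the analogous formula for $\ov h$ and the linearization applied to $(h-\ov h)^{(s-1)}$, and collecting by order of derivative, produces the decomposition
\begin{align*}
R\,:=\,&\p_x^{s-1}(\kappa(\eps,h)-\kappa(\eps,\ov h))-\kappa'(\eps,h)[(h-\ov h)^{(s-1)}]\\
=\,&(g'(h')-g'(\ov h'))\,\ov h^{(s+1)}+(s-1)g''(h')h''(h-\ov h)^{(s)}\\
&+s\bigl(g''(h')h''-g''(\ov h')\ov h''\bigr)\ov h^{(s)}+\bigl(Q_s(h',\ldots,h^{(s-1)})-Q_s(\ov h',\ldots,\ov h^{(s-1)})\bigr).
\end{align*}
The crucial cancellations occur at the top two orders: $g'(h')h^{(s+1)}-g'(\ov h')\ov h^{(s+1)}-g'(h')(h-\ov h)^{(s+1)}=(g'(h')-g'(\ov h'))\ov h^{(s+1)}$, and similarly for $h^{(s)}$, where the coefficient factor $s$ from the Leibniz and the coefficient $1$ from the linearization combine with the splitting $h^{(s)}=(h-\ov h)^{(s)}+\ov h^{(s)}$ to give the third term and the coefficient $s-1$ in the second.

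Each piece is then estimated in $H^{1/2}(\SSM)$ via the multiplier inequality $\|uv\|_{1/2}\le C\|u\|_{s_0}\|v\|_{1/2}$ (valid since $s_0\ge 7/2>1/2$) and Moser's tame estimate for compositions with smooth functions: $\|g'(h')-g'(\ov h')\|_{s_0}\le C\|h-\ov h\|_{s_0+1}$ and $\|g''(h')h''-g''(\ov h')\ov h''\|_{s_0}\le C\|h-\ov h\|_{s_0+2}$, both dominated by $C\|h-\ov h\|_{s+1/2}$ because $s+1/2\ge 2s_0+3$. The factors $\|\ov h^{(s+1)}\|_{1/2}=\|\ov h\|_{s+3/2}$ and $\|\ov h^{(s)}\|_{1/2}\le\|\ov h\|_{s+3/2}$ produce exactly the $\|\ov h\|_{s+3/2}$ on the right-hand side, while $\|g''(h')h''\|_{s_0}$ is uniformly bounded on $\clu_s$ (requires $s_0+2\le s$, which holds).

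The main obstacle is the remainder $Q_s(h')-Q_s(\ov h')$, since $Q_s$ depends on derivatives up to order $s-1$, for which $\|h^{(k)}\|_{s_0}=\|h\|_{s_0+k}$ is \emph{not} bounded in $\clu_s$ when $k>s-s_0$. My plan is to write
\[
Q_s(h',\ldots)-Q_s(\ov h',\ldots)=\sum_{j=1}^{s-1}(h^{(j)}-\ov h^{(j)})\int_0^1(\p_{v_j}Q_s)(\ov h+\theta(h-\ov h))(x)\,d\theta,
\]
and exploit the one-dimensional Sobolev embedding $H^{1/2+\delta}\hookrightarrow L^\infty$: since $\|h\|_s,\|\ov h\|_s\le M$ and $s\ge 10$, every $h^{(k)},\ov h^{(k)}$ with $k\le s-1$ is bounded in $L^\infty$ by a constant, hence $(\p_{v_j}Q_s)(\ldots)$ is bounded uniformly in $L^\infty$ and $H^{1/2}$. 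The Kato--Ponce-type inequality $\|uv\|_{1/2}\le C(\|u\|_{L^\infty}\|v\|_{1/2}+\|u\|_{1/2}\|v\|_{L^\infty})$ applied with $u=h^{(j)}-\ov h^{(j)}$ then yields a contribution of order $C\|h-\ov h\|_{s+1/2}$ per summand with no $\|\ov h\|_{s+3/2}$-factor. Assembling the four contributions gives the stated inequality, with a constant depending only on $\clu_s$.
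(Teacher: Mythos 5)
Your approach is correct and follows a genuinely different route from the paper's. The paper works abstractly with the Fr\'echet derivatives of the curvature operator: it writes
\[
\p_x^{s-1}\kappa(\eps,h)=\kappa'(\eps,h)[h^{(s-1)}]+\sum_{l=2}^{s-1}C_{p_1\ldots p_l}\,\kappa^{(l)}(\eps,h)[h^{(p_1)},\ldots,h^{(p_l)}],
\]
subtracts the same identity for $\ov h$, and estimates every $\kappa^{(l)}$-term and every difference $\kappa^{(l)}(\eps,h)-\kappa^{(l)}(\eps,\ov h)$ through the composition bound $\|\kappa^{(l)}(\eps,h)[h_1,\ldots,h_l]\|_{1/2}\le C\|h_1\|_3\cdots\|h_{l-1}\|_3\|h_l\|_{5/2}$ together with a mean-value argument on the convex set $\clu_s$. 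You instead exploit the explicit divergence structure $\kappa(\eps,h)=\p_x g(h')$ with $g(u)=u(1+\eps^2u^2)^{-1/2}$ (so $g'(u)=(1+\eps^2u^2)^{-3/2}$), use Leibniz/Fa\`a di Bruno to isolate the two principal terms $g'(h')h^{(s+1)}+s\,g''(h')h''h^{(s)}$ --- the coefficient $s=(s-1)+1$ from the $j=1$ term and the top of the $j=s-1$ term is correct --- and carry out the cancellation against the linearization $\kappa'(\eps,h)[(h-\ov h)^{(s-1)}]=g'(h')(h-\ov h)^{(s+1)}+g''(h')h''(h-\ov h)^{(s)}$ by hand, leaving a remainder $Q_s$ depending only on $h',\ldots,h^{(s-1)}$. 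Your four-term decomposition of $R$ and all coefficient bookkeeping check out, and your treatment of the three explicit pieces via the $H^{s_0}$-multiplier inequality and Moser-type composition estimates is fine, with all the regularity indices fitting under $s\ge 2s_0+3$.

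One step is asserted rather than argued: that $(\p_{v_j}Q_s)(\ov h+\theta(h-\ov h))$ is bounded uniformly in $H^{1/2}$. $L^\infty$-boundedness of the individual factors $h^{(k)}_\theta$, $k\le s-1$, does not by itself bound their product in $H^{1/2}$. What you actually need is that each factor --- including the smooth prefactors $g^{(m)}(h'_\theta)$ --- is bounded in \emph{both} $L^\infty$ and $H^{1/2}$, which does hold here since $\|h^{(k)}_\theta\|_{1/2}\le\|h_\theta\|_{s-1/2}\le C$ and $\|h^{(k)}_\theta\|_{L^\infty}\le C\|h_\theta\|_{k+1}\le C\|h_\theta\|_s$ for $k\le s-1$, combined with an iterated Kato--Ponce estimate of the form $\|u_1\cdots u_r\|_{1/2}\le C\sum_i\|u_i\|_{1/2}\prod_{j\ne i}\|u_j\|_{L^\infty}$. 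Once this is spelled out the proof closes. Overall your argument is more explicit and elementary than the paper's, at the price of tracking the principal-symbol coefficients and separating out $Q_s$ by hand; the paper's formulation via Fr\'echet derivatives of $\kappa$ treats all derivative terms uniformly but relies on the general composition estimate for $\kappa^{(l)}$ of arbitrary order.
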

\begin{proof} By the chain rule, 
\begin{align*}
\p_x^{s-1}\kappa(\eps,h)&=\kappa'(\eps, h)[h^{(s-1)}]
+\sum_{l=2}^{s-1}C_{p_1\ldots p_l}\kappa^{(l)}(\eps,h)[h^{(p_1)}, \ldots,
h^{(p_l)}],
\end{align*}
with  $1\leq p_1\leq \ldots\leq p_l,$ and
$p_1+\ldots+p_l=s-1.$ 
This also holds  if we replace $h$ by $\overline h$.  
We subtract these identities and obtain 
\begin{align}\label{rkappa}
&\p_x^{s-1}\left(\kappa(\eps,h)-\kappa(\eps,\ov h)\right)-
\kappa'(\eps,h)[(h-\overline
h)^{(s-1)}]\nonumber\\
=&\left(\kappa'(\eps,h)-\kappa'(\eps,\ov h)\right)[\ov
h^{(s-1)}]
+\sum_{l=2}^{s-1} C_{p_1\ldots p_l}\bigg(
(\kappa^{(l)}(\eps,h)-\kappa^{(l)}(\eps,\ov h))[\ov h^{(p_1)}, \ldots,\ov
h^{(p_l)}]\nonumber\\
&+\sum_{j=1}^l\kappa^{(l)}(\eps,h)[h^{(p_1)},\ldots,h^{(p_{j-1})},
(h-\ov h)^{(p_j)},\ov h^{(p_{j+1})},\ldots,\ov h^{(p_l)}]\bigg)
\end{align}
The terms on the right are estimated separately. One straightforwardly gets
 \begin{equation}\label{eq:cur1}
 \|\kappa^{(l)}(\eps,h)[h_1,\ldots ,h_l]\|_{1/2}\leq
C\|h_1\|_{3}\ldots\|h_{l-1}\|_{3}\|h_l\|_{5/2}
\end{equation}
for all $l\in\NNM$, $l\geq1$, $h_l\in H^{5/2}(\SSM)  $ and $ h_1, \ldots
h_{l-1}\in
H^{3}(\SSM)$.
Since $\clu_s$ is convex, we additionally have
\begin{align*}
&\left\|\left(\kappa^{(l)}(\eps,h)-\kappa^{(l)}(\eps,\ov
h)\right)[h_1,\ldots,h_l]\right\|_{1/2}\\
\leq&\int_0^1\left\|\kappa^{(l+1)}(\eps,rh+(1-r)\ov h)
[h-\ov h,h_1,\ldots,h_l]\right\|_{1/2}\,dr\leq
C\|h-\ov h\|_3\|h_1\|_3\ldots\|h_{l-1}\|_3\|h_l\|_{5/2}.
\end{align*}
Applying these estimates to all terms in \eqref{rkappa} and adding them up
yields the result.
\end{proof}

Finally, we give a parallel estimate concerning the complete operator $\mathcal
F$. 
Using the invariance of our problem with respect to horizontal translations we
obtain, as in \cite{guprth}, Eq. (6.8), the ``chain rule''

\begin{equation}\label{eq:declf}
\p_x^{s-1}\clf(\eps,h)=F(\eps,h)\{\p_x^{s-1}\kappa(\eps,h)\}+\sum_{k\geq 1}
C_{p_1,\ldots,
p_{k+1}}F^{(k)}(\eps,h)[h^{(p_1)},\ldots,h^{(p_k)}]\{\p_x^{p_{k+1}}\kappa(\eps,
h)\},
\end{equation}
 $h\in\clu_s$ sufficiently smooth.
The sum is taken over all $(k+1)$-tuples $(p_1, \ldots,p_{k+1})$ satisfying 
$p_1+\ldots+p_{k+1}=s-1$ and  $p_1, \ldots, p_k\geq 1$.

\begin{lemma}\label{L:6}  Additionally to Lemma {\rm\ref{L:5}}, assume
$\eps\in(0,\eps_0)$.
Define
\[
P_s(\eps,h,\ov h):=\p_x^{s-1}\left(\clf(\eps,h)-\clf(\eps,\ov h)\right)
-F(\eps,h)\{\kappa'(\eps, h)[(h- \ov h)^{(s-1)}]\}.
\]
Then
\begin{equation}\label{eq:L6}
\|P_s(\eps, h, \ov h)\|_{-1/2}\leq C \eps^{-2}(1+\|\ov h\|_{s+3/2})\|h-\ov
h\|_{s+1/2}.
 \end{equation}
The constant $C$ depends only on $\clu_s$.
\end{lemma}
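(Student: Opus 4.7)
My plan is to start from the chain rule \eqref{eq:declf} applied to both $h$ and $\ov h$. Inserting the auxiliary quantity $F(\eps,h)\{\p_x^{s-1}\kappa(\eps,\ov h)\}$ into $\p_x^{s-1}(\clf(\eps,h)-\clf(\eps,\ov h))$ and comparing with $F(\eps,h)\{\kappa'(\eps,h)[(h-\ov h)^{(s-1)}]\}$ will split $P_s$ into three pieces: (I) the linear main term $F(\eps,h)\{R\}$, where $R:=\p_x^{s-1}(\kappa(\eps,h)-\kappa(\eps,\ov h))-\kappa'(\eps,h)[(h-\ov h)^{(s-1)}]$; (II) the base-point variation $(F(\eps,h)-F(\eps,\ov h))\{\p_x^{s-1}\kappa(\eps,\ov h)\}$; and (III) the term-by-term difference between the two multilinear sums over $k\ge 1$ in \eqref{eq:declf} evaluated at $h$ and at $\ov h$.

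Pieces (I) and (II) are the easy part. For (I), \eqref{eq:cru1} with $m=0$ gives $\|F(\eps,h)\{R\}\|_{-1/2}\le C\eps^{-2}\|R\|_{1/2}$, and Lemma \ref{L:5} controls $\|R\|_{1/2}$ by exactly $C(1+\|\ov h\|_{s+3/2})\|h-\ov h\|_{s+1/2}$. For (II), I would apply the fundamental theorem of calculus, $F(\eps,h)-F(\eps,\ov h)=\int_0^1 F'(\eps,rh+(1-r)\ov h)[h-\ov h]\,dr$, after which \eqref{eq:cru1} with $m=1$ yields $C\eps^{-2}\|h-\ov h\|_{s_0}\|\p_x^{s-1}\kappa(\eps,\ov h)\|_{1/2}$; a standard product estimate bounds $\|\p_x^{s-1}\kappa(\eps,\ov h)\|_{1/2}$ by a nonlinear function of $\|\ov h\|_{s+3/2}$, and $\|h-\ov h\|_{s_0}\le\|h-\ov h\|_{s+1/2}$ since $s_0\le s$.

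Piece (III) is where the real work lies. Each summand would be expanded telescopically into (a) an integral of $F^{(k+1)}(\eps,h_r)[h-\ov h,\ov h^{(p_1)},\ldots,\ov h^{(p_k)}]\{\p_x^{p_{k+1}}\kappa(\eps,\ov h)\}$ arising from varying the base point of $F^{(k)}$, (b) $k$ terms obtained by replacing one slot $h^{(p_j)}$ by $(h-\ov h)^{(p_j)}$, and (c) one term with $\p_x^{p_{k+1}}(\kappa(\eps,h)-\kappa(\eps,\ov h))$ in the argument. For each of these I would choose between \eqref{eq:cru1} (all variations in $H^{s_0}$, argument in $H^{1/2}$) and \eqref{eq:cru2} (one privileged variation in $H^{3/2}$, argument in $H^{s_0-1}$) depending on the sizes of the orders $p_j$: whenever the slot carrying the factor $h-\ov h$ has low derivative order in the sense $s_0+p_j\le s+1/2$, \eqref{eq:cru1} applies directly; otherwise $p_j$ is so large that all complementary orders are automatically small by $\sum p_i=s-1$, and the difference factor is placed in the privileged slot of \eqref{eq:cru2}. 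The (c)-terms are handled by a Moser-type variant of Lemma \ref{L:5} for $\p_x^{p_{k+1}}(\kappa(\eps,h)-\kappa(\eps,\ov h))$.

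The main obstacle is precisely this case analysis in (III): one has to verify that for every admissible tuple $(p_1,\ldots,p_{k+1})$ with $\sum p_i=s-1$ and $p_1,\ldots,p_k\ge 1$, at least one of \eqref{eq:cru1}, \eqref{eq:cru2} admits an assignment of Sobolev indices compatible with $\ov h\in H^{s+3/2}$ and $h-\ov h\in H^{s+1/2}$. The hypothesis $s\ge 2s_0+3$ is exactly what guarantees this dichotomy; once it is in place, the prefactor $\eps^{-2}$ appears uniformly from \eqref{eq:cru1}--\eqref{eq:cru2}, and collecting the contributions (I)--(III) yields \eqref{eq:L6}.
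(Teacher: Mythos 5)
Your decomposition and your toolkit are exactly those used in the paper: insert $F(\eps,h)\{\p_x^{s-1}\kappa(\eps,\ov h)\}$, isolate the linearization remainder $R$ (estimated via Lemma \ref{L:5} and \eqref{eq:cru1}), the base-point variation (FTC plus \eqref{eq:cru1} with $m=1$), and telescope the $k\ge 1$ sum in \eqref{eq:declf} into the three groups corresponding to varying the base point, replacing one slot by $(h-\ov h)^{(p_j)}$, and replacing the curvature argument by its difference; each is controlled by choosing between \eqref{eq:cru1} and \eqref{eq:cru2}.

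The one place where your argument is less clean than the paper's is the selection rule in piece (III). You key the dichotomy on the slot carrying $h-\ov h$: if that slot's order $p_j$ is small, use \eqref{eq:cru1}; otherwise put $h-\ov h$ in the privileged $H^{3/2}$ slot of \eqref{eq:cru2}. The paper instead keys on which index $p_\ell$ is \emph{largest}: Case~1, $p_{k+1}$ maximal, use \eqref{eq:cru1}; Case~2, some variation index (WLOG $p_1$) maximal, use \eqref{eq:cru2} with the \emph{maximal} variation placed in the privileged slot, irrespective of where $h-\ov h$ sits. This matters because there can be a term in which $h-\ov h$ sits in a low-order slot while a \emph{different} undifferenced slot carries the maximal order $p_1\le s-2$; your rule then invokes \eqref{eq:cru1} and produces a factor $\|\ov h^{(p_1)}\|_{s_0}\le\|\ov h\|_{p_1+s_0}$, which stays within the permitted $\|\ov h\|_{s+3/2}$ only when $s_0=7/2$ is chosen minimal. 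The paper's max-based rule avoids this marginal reliance on $s_0$. So your proof is correct in the relevant parameter regime (and the paper itself points to $s_0=7/2$ as the choice relevant for Theorem~\ref{thm2}), but the paper's case split is the more robust one. Finally, a small terminological point: the $(c)$-terms do not need a ``Moser-type variant of Lemma~\ref{L:5}''; Lemma~\ref{L:5} controls the \emph{linearization error} of $\kappa$, whereas the $(c)$-terms only require a standard Moser/tame product bound on the full difference $\kappa(\eps,h)-\kappa(\eps,\ov h)$ in the appropriate Sobolev norm, as the paper uses implicitly.
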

\begin{proof}
Observe that by density and continuity arguments, it is sufficient to show
\eqref{eq:L6} under the additional assumption that $h$ and $\ov h$ are smooth.
We infer from \eqref{eq:declf} that $P_s(\eps,h,\ov h)=E^a+E^b+G,$ with 
\begin{align*}
E^a:=&F(\eps,h)\{\p_x^{s-1}\kappa(\eps,h)\} -F(\eps,\ov
h)\{\p_x^{s-1}\kappa(\eps,\ov h)\} 
-F(\eps,h)\{\kappa'(\eps, h)[(h- \ov h)^{(s-1)}]\}\\[1ex]
E^b:=&F'(\eps,h)[h^{(s-1)}]\{\kappa(\eps,h)\}-F'(\eps,\ov h) [\ov
h^{(s-1)}]\{\kappa(\eps,\ov h)\}
\end{align*}
and $G:=\sum C_{p_1,\ldots, p_{k+1}} E^c_{p_1,\ldots, p_{k+1}},$ where  the sum
is taken over all tuples satisfying additionally $1\leq p_{k+1}\leq s-2,$ 
and  $E^c_{p_1,\ldots, p_{k+1}}=:E^c$ is given by
\begin{align*}
 E^c:=&F^{(k)}(\eps,h)[h^{(p_1)},\ldots,h^{(p_k)}]
\{\p_x^{p_{k+1}}\kappa(\eps,h)\}-F^{(k)}(\eps,\ov h) 
[\ov h^{(p_1)},\ldots,\ov h^{(p_k)}]
\{\p_x^{p_{k+1}}\kappa(\eps,\ov h)\}.
\end{align*}
We estimate $E^a$ first and write $E^a=E^a_1+E^a_2,$ where
\begin{align*}
E^a _1:=&F(\eps,h)\{\p_x^{s-1}(\kappa(\eps,h)-\kappa(\eps,\ov h))
-\kappa'(\eps, h) [(h- \ov h)^{(s-1)}]\}\\[1ex]
E^a_2:=&\left(F(\eps,h)-F(\eps,\ov h)\right)\{\p_x^{s-1}\kappa(\eps,\ov h)\}.
\end{align*}
Invoking \eqref{eq:cru1} (with $m=0$) and Lemma  \ref{L:5}, we get that
\begin{equation}\label{Ea1}
\begin{aligned}
\|E^a_1\|_{-1/2}\leq& C\eps^{-2}\|\p_x^{s-1}(\kappa(\eps,h)-\kappa(\eps,\ov h))
- \kappa'(\eps, h)[(h- \ov h)^{(s-1)}]\|_{1/2}\\[1ex]
\leq &C\eps^{-2}(1+\|\overline h\|_{s+3/2})\|h-\overline h\|_{s+1/2}.
\end{aligned}
\end{equation}
In order to estimate $E^a_2,$ we  write
\[
E^a_2=\int_0^1F'(\eps,rh+(1-r)\ov h)[h-\ov h]\{\p_x^{s-1}\kappa(\eps,\ov h)\}\,
dr,
\] 
and using \eqref{eq:cru1}, with $m=1$, yields
\begin{equation}\label{Ea2}
\|E^a_2\|_{-1/2}\leq \eps^{-2}\|h-\ov h\|_{s_0}\|\ov h\|_{s+3/2} \leq
C\eps^{-2}\|\ov h\|_{s+3/2}\|h-\ov h\|_{s}.
\end{equation}
Similarly, we decompose  $E^b= E^b_1+E^b_2+E^b_3,$ 
where 
\begin{align*}
E_1^b:=&\left(F'(\eps,h)-F'(\eps,\ov h)\right)[\ov
h^{(s-1)}]\{\kappa(\eps,\ov h)\}\\
=&\int_0^1F''(\eps,rh+(1-r)\ov h)[h-\ov h,\ov
h^{(s-1)}]\{\kappa(\eps,\ov h)\}\,dr,\\[1ex]
E_2^b:=&F'(\eps, h)[\ov h^{(s-1)}]\{\kappa(\eps,h)-\kappa(\eps,\ov h)\},\\[1ex]
E_3^b:=&F'(\eps,h)[h^{(s-1)}-\ov h^{(s-1)}]\{\kappa(\eps,h)\}.
\end{align*}
The estimate  \eqref{eq:cru2} with $m=1$ and $m=2$, 
respectively, yields 
\begin{align*}
\|E_1^b\|_{-1/2}&\leq C\eps^{-2}\|h-\ov h\|_{s_0}\|\ov h^{(s-1)}\|_{3/2}
\|\kappa(\eps,\ov h)\|_{s_0}\leq C \eps^{-2}\|\ov h\|_{s+1/2}\|h-\ov
h\|_{s},\\[1ex]
\|E_2^b\|_{-1/2}&\leq C\eps^{-2}\|\ov h^{(s-1)}\|_{3/2}\|
\kappa(\eps,h)-\kappa(\eps,\ov h)\|_{s_0}\leq C \eps^{-2}\|\ov
h\|_{s+1/2}\|h-\ov h\|_{s},\\[1ex]
\|E_3^b\|_{-1/2}&\leq C\eps^{-2}\|h^{(s-1)}-\ov h^{(s-1)}\|_{3/2}
\|\kappa(\eps,h)\|_{s_0}\leq C \eps^{-2}\|h-\ov h\|_{s+1/2}.
\end{align*}
To estimate  $G$, we proceed similarly and decompose 
$E^c=E^c_1+E^c_2+E^c_3$, with 
\begin{align*}
 E^c_1:=&\left(F^{(k)}(\eps,h)-F^{(k)}(\eps,\ov h)\right) [\ov
h^{(p_1)},\ldots,\ov h^{(p_k)}]\{\p_x^{p_{k+1}}\kappa(\eps,\ov h)\}\\
=&\int_0^1 F^{(k+1)}(\eps,rh+(1-r)\ov h)[h-\ov h,\ov
h^{(p_1)},\ldots,\ov h^{(p_k)}]\{\p_x^{p_{k+1}}\kappa(\eps,\ov h)\}\,dr,\\[1ex]
  E^c_2:=&F^{(k)}(\eps,h)[\ov h^{(p_1)},\ldots,\ov h^{(p_k)}]
\{\p_x^{p_{k+1}}(\kappa(\eps,h)-\kappa(\eps,\ov h))\},\\[1ex]
 E^c_3:=&\sum_{i=1}^kF^{(k)}(\eps,h)[\ov h^{(p_1)},\ldots,\ov h^{(p_{i-1})},
h^{(p_{i})}-\ov h^{(p_{i})},  h^{(p_{i+1})}, \ldots,
h^{(p_k)}]\{\p_x^{p_{k+1}}\kappa(\eps,\ov h)\}.
\end{align*}
We distinguish two cases.\\
{\em Case $1$.} Suppose first that $p_{k+1}\geq p_j$  for all $1\leq j\leq k.$
Then 
\begin{equation*}
p_{k+1}\leq s-2, \qquad p_1,\ldots, p_k\leq \frac{s-1}{2}\leq s-s_0-2, 
\end{equation*}
by the choice of $s.$
Choosing $m=k+1$, we infer from $p_{k+1}\geq 1$ that $k+1\leq s-1$,   and
together with relation \eqref{eq:cru1} we find  
 \[
\|E_1^c\|_{-1/2}\leq C\eps^{-2}\|h-\ov h\|_{s_0}\|\ov h^{(p_1)}\|_{s_0}
\ldots\|\ov h^{(p_k)}\|_{s_0}\|\ov h\|_{s+1/2}\leq C\eps^{-2}\|\ov
h\|_{s+1/2}\|h-\ov h\|_{s},
\]
while, for $m=k,$ the same relation implies
\begin{align*}
\|E^c_2\|_{-1/2}&\leq C\eps^{-2}\|h-\ov h\|_{s+1/2},\\[1ex]
\|E^c_3\|_{-1/2}&\leq C\eps^{-2}\|\ov h\|_{s+1/2}\|h-\ov h\|_{s}.
\end{align*}
{\em Case $2$.} Due to symmetry, we only have to consider the case when 
$p_1\geq
p_j,$  for all $1\leq j\leq k+1.$
Then
\begin{equation*}
p_{1}\leq s-2, \qquad p_2,\ldots, p_{k+1}\leq \frac{s-1}{2}\leq s-s_0-2, 
\end{equation*}
and \eqref{eq:cru2} with $m=k+1$ and $m=k,$ respectively, yields
\[
\|E_i^c\|_{-1/2}\leq C\eps^{-2}\|h-\ov h\|_{s}.
\]
This completes the proof.
\end{proof}

\section{Approximation by power series in $\eps$}\label{sec4}

In this section we construct  operators $\clf_k$ and functions $t\mapsto
h_{\eps,k}(t)$ such that, in a sense to be made precise below, 
\[\clf(\eps, h)=\clf_k(\eps,h)+O(\eps^{k+1}),\]
and $h_{\eps,k}$ is an approximate solution to \eqref{eq:CP}. Formally, the
construction is by expansion with respect to $\eps$ near $0$, i.e.,
$\clf_k(\eps,h)$ and $h_{\eps,k}$ are polynomials of order $k$ in $\eps$. In
lowest order $k=0$, we will recover the Thin Film equation \eqref{thf}.
As this construction involves a loss of regularity that increases with $k$,
we will have to assume higher smoothness of $h$.

Fix $k\in\NNM$ and let $s_1\geq s+k+15/2$, with $s$ as before. In this section,
we will assume $h\in\clu_{s_1}$ and all constants in our estimates will be
independent of $h$. 

We start with a series expansion for $w(\eps,h)\{f\}$. 
\begin{lemma} For $p=0,1,\ldots,k+2$ there are operators
\[w^{[p]}\in C^\infty\left(\clu_{s_1},
\cll\left(H^{s_1-2}(\SSM),H^{s_1-2-p}(\0)\right)\right)\]
such that
\[\left\|w(\eps,h)\{f\}-\sum_{p=0}^{k+2}\eps^p
w^{[p]}(h)\{f\}\right\|_{s+3/2}^{\0}
\leq C\eps^{k+3}\|f\|_{s_1-2}.\]
In particular,
\[w^{[0]}(h)\{f\}(x,y)=f(x),\quad
w^{[2]}(h)\{f\}(x,y)=f''(x)\int_y^1\frac{\tau}{a_2^2(x,\tau)}\,d\tau,\quad
w^{[p]}=0\mbox{ for $p$ odd.}\]
\end{lemma}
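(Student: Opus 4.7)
The proof is by formal power-series expansion in $\eps$ followed by an energy-type bound on the remainder. Recalling \eqref{repA} we split
\begin{equation*}
D_iD_i = L_0 + \eps^2 M_1,\qquad L_0 w := a_2\p_2(a_2\p_2 w),
\end{equation*}
where $M_1 w$ collects all the remaining terms (those involving at least one $\p_1$). Inserting the ansatz $w=\sum_{p\geq 0}\eps^p w^{[p]}$ into $-D_iD_iw = 0$ and matching coefficients of $\eps^p$ produces the recursion
\begin{equation*}
L_0 w^{[0]} = L_0 w^{[1]} = 0, \qquad L_0 w^{[p]} = -M_1 w^{[p-2]} \quad (p\geq 2),
\end{equation*}
with Dirichlet data $w^{[0]}|_{\Gamma_\pm} = f\circ\phi_\pm$ and $w^{[p]}|_{\Gamma_\pm} = 0$ for $p\geq 1$. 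At each fixed $x$ this is a two-point boundary value problem in $y\in[-1,1]$ which, since $a_2>0$, is uniquely solvable by two integrations in $y$ weighted by $a_2$. Solving the $p=0$ and $p=1$ equations yields $w^{[0]}(x,y)=f(x)$ and $w^{[1]}\equiv 0$, and a straightforward induction, using the vanishing of the source $-M_1 w^{[p-2]}$ whenever $p-2$ is odd, gives $w^{[p]}=0$ for every odd $p$. For $p=2$ one integrates $L_0 w^{[2]} = -f''$ directly to obtain the claimed explicit representation.

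Smoothness of the maps $w^{[p]}$ follows from the regularity statement in Lemma \ref{L:esth} together with the chain rule and the product estimate recalled at the beginning of Section \ref{sec3}: the coefficients $a_1,a_2$ depend smoothly on $h\in \clu_{s_1}$, and each step of the recursion applies $M_1$, which consumes two $x$-derivatives of the previous $w^{[p-2]}$. This accounts exactly for the loss $s_1-2 \mapsto s_1-2-p$ recorded in the statement. The hypothesis $s_1 \geq s+k+15/2$ is tailored precisely so that this procedure can be carried out up to $p=k+2$ while leaving enough Sobolev reserve for the remainder estimate below.

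For the error bound, set $R := w(\eps,h)\{f\} - \sum_{p=0}^{k+2}\eps^p w^{[p]}(h)\{f\}$. A direct telescoping in the recursion, combined with $-D_iD_iw = 0$, gives
\begin{equation*}
\left\{\begin{array}{rcll}
-D_iD_i R &=& \eps^{k+3}M_1 w^{[k+1]} + \eps^{k+4}M_1 w^{[k+2]} & \mbox{in }\Omega,\\[0.5ex]
R &=& 0 & \mbox{on }\p\Omega,
\end{array}\right.
\end{equation*}
and only one of the two source terms actually survives by the odd-parity statement. By the regularity of the $w^{[p]}$ just obtained, the right-hand side is bounded in $H^{s+2}(\Omega)$ by $C\eps^{k+3}\|f\|_{s_1-2}$. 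Applying Proposition \ref{P:1} -- or rather the straightforward extension of its inductive proof to the higher integer order $t=s+3$, valid because $h\in \clu_{s_1}$ provides all the smoothness of $a_1,a_2$ needed -- yields $\|R\|^\Omega_{s+3,\eps} \leq C\eps^{k+3}\|f\|_{s_1-2}$, and the claim follows from the obvious comparison $\|R\|^\Omega_{s+3/2} \leq \|R\|^\Omega_{s+2} \leq \|R\|^\Omega_{s+3,\eps}$. The main technical obstacle is precisely this extension of the scaled elliptic estimate to the higher Sobolev scale: the induction step in the proof of Proposition \ref{P:1} adapts verbatim, but the bookkeeping needed to propagate the loss of regularity through the recursive construction of the $w^{[p]}$ is what forces the lower bound on $s_1$ in the hypothesis.
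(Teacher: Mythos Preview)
Your argument is correct and follows essentially the same route as the paper: the same splitting $\cla(\eps,h)=\cls_0(h)+\eps^2\cls_2(h)$ (your $L_0$ and $M_1$), the same recursion for the $w^{[p]}$ solved by two integrations in $y$, and the same remainder problem handled by Proposition~\ref{P:1} at a higher Sobolev level made available by $h\in\clu_{s_1}$. The paper phrases the last step as ``Proposition~\ref{P:1} with $s$ replaced by $s_1$ and $t=s+5/2$''; your choice $t=s+3$ and the chain $\|R\|^\Omega_{s+3/2}\leq\|R\|^\Omega_{s+2}\leq\|R\|^\Omega_{s+3,\eps}$ is the same idea with cleaner integer bookkeeping.
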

\begin{proof}
 Recalling \eqref{repA} we have $\cla(\eps,h)=\cls_0(h)+\eps^2\cls_2(h)$ with
\begin{equation}\label{eq:cls}
\begin{aligned}
\cls_0(h):=&a_2^2\p_{22}+a_2a_{2,2}\p_2,\\[1ex]
\cls_2(h):=&\p^2_{11}+2a_1\p^2_{12}+a_1^2\p_{22}^2+\left(a_{1,1}+a_1a_{1,2}
\right)\p_2.
\end{aligned}
\end{equation}
The terms $w^{[p]}(h)\{f\}$ are determined successively from inserting the
ansatz 
\[w(\eps,h)\{f\}=\sum_{p=0}^{k+2}\eps^p w^{[p]}(h)\{f\}+R\]
into 
\[\left\{\begin{array}{rcll}
(S_0(h)+\eps^2S_2(h))w(\eps,h)\{f\}&=&0&\mbox{in $\Omega$,}\\
w(\eps,h)\{f\}&=&f\circ\phi_\pm&\mbox{on $\Gamma_\pm$,}
\end{array}\right.\]
and equating terms with equal powers of $\eps$. Thus we obtain
\[
\left\{
\begin{array}{rlllll}
\cls_0(h)w^{[0]}&=&0& \text{in}&\Omega,\\[1ex]
w^{[0]}&=&f\circ\phi_\pm&\text{on}& \Gamma_\pm
\end{array}
\right.\qquad
\left\{
\begin{array}{rlllll}
\cls_0(h)w^{[1]}&=&0& \text{in}&\Omega,\\[1ex]
w^{[1]}&=&0&\text{on}& \p\Omega.
\end{array}
\right.
\]
and further 
\[
\left\{
\begin{array}{rlllll}
\cls_0(h)w^{[p+2]}&=&-\cls_2(h) w^{[p]}& \text{in}&\Omega,\\[1ex]
w^{[p+2]}&=&0&\text{on}& \p\Omega,
\end{array}
\right.
\]
$p=0,\ldots,k$. 
Observe that the general problem
\[
\left\{
\begin{array}{rlllll}
\cls_0(h)u&=&G& \text{in}&\Omega,\\[1ex]
u&=&g\circ\phi_\pm&\text{on}& \Gamma_\pm
\end{array}
\right.\]
(with $g$ and $G$ even) is solved by
\[u(x,y)=g(x)-\int_y^1\frac{1}{a_2(x,\tau)^2}\int_0^\tau G(x,s)\,ds\,d\tau,\]
and for this solution we have
\[\|u\|_t^{\0}\leq C\left(\|g\|_t+\|G\|_t^{\0}\right),\]
$t\in[s+3/2,s_1-2]$. 
All statements concerning the mapping properties and the explicit form of the
$w^{[p]}$ follow from this. To estimate the remainder, observe that
\[
\left\{
\begin{array}{rlllll}
\cla(\eps,h)R&=&-\eps^{k+3}\cls_2(h) w^{[k+1]}(h)\{f\}- \eps^{k+4}\cls_2(h)
w^{[k+2]}(h)\{f\}& \text{in}&\Omega,\\[1ex]
R&=&0&\text{on}& \p\Omega.
\end{array}
\right.
\]
The estimate follows from Proposition \ref{P:1} with $s$ replaced by $s_1$ and
$t=s+5/2$. 
\end{proof}
Recall, furthermore, that
\[\clb(\eps,h)=\eps^{-2}\clb^{[0]}(h)+\clb^{[2]}(h)\]
where
\[\clb^{[0]},\clb^{[2]}\in
C^\infty\left(\clu_{s_1},\cll\left(H^{s+3/2}(\0),H^s(\SSM)\right)\right)\]
are given by
\[\clb^{[0]}(h)w=h^{-1}(\partial_2w)|_{\Gamma_+}\circ\phi_+^{-1},\quad
\clb^{[2]}(h)w=-h'(\p_1w)|_{\Gamma_+}\circ\phi_+^{-1}+h^{-1}
(h')^2(\partial_2w)|_{\Gamma_+}\circ\phi_+^{-1}.\]
By Taylor expansion around $\eps=0$ it is straightforward to see that there are
functions
\[\kappa^{[p]}\in C^\infty\left(\clu_{s_1}, H^{s_1-2}(\SSM)\right),\qquad
p=0,\ldots,k+2,\]
such that
\[\left\|\kappa(\eps,h)-\sum_{p=0}^{k+2}\eps^p\kappa^{[p]}(h)\right\|_{s_1-2}
\leq C\eps^{k+3}.\]
In particular,
\[\kappa^{[0]}(h)=h''\quad\text{and } \kappa^{[p]}=0\text{ for $p$ odd}.\]
In view of \eqref{defF}, \eqref{defclf} we define
\[\clf_k(\eps,h):=\sum_{p=0}^{k+2}\eps^{p-2}
\sum_{j+m+l=p}\clb^{[j]}(h)w^{[m]}(h)\{\kappa^{[l]}(h)\},\]
$j\in\{0,2\}$. As all terms corresponding to $p=0$ and $p=1$ vanish, this is
indeed a polynomial in $\eps$ and
\[\clf_k\in C^\infty\left([0,1)\times\clu_{s_1}, H^s(\SSM)\right).\]
In particular,
\[\clf_0(\eps,h)=\clf_k(0,h)=\clb^{[0]}(h)w^{[2]}(h)\{\kappa^{[0]}(h)\}+
\clb^{[2]}(h)w^{[0]}(h)\{\kappa^{[0]}(h)\}=-(hh''')'\]
(cf. \eqref{thf}).

It is straightforward now to obtain
\begin{equation}\label{eq:ESS}
\|\clf(\eps,h)-\clf_k(\eps,h)\|_{s}\leq C\eps^{k+1}.
\end{equation}

To construct the approximation $h_{\eps,k}$ we start with an arbitrary,
sufficiently smooth, strictly positive solution $h_0$ of the Thin Film equation
\eqref{thf} and successively add higher order corrections. We closely follow 
\cite[Lemma 5.3]{guprth} here. Fix $T>0$, $h^\ast\in\clu_s$, and set for brevity
\begin{align*}
\tau&:=k+15/2,\\
s_2&:=s_2(k,s):=s+[k/2](\tau-4)+\tau+1,\\
\clv_\sigma&:=\big\{H\in C\big([0,T],\clu_s\cap H^\sigma(\SSM)\big)
\cap C^1\big([0,T],
H^{\sigma-4}(\SSM)\big)\,\big|\,H(0)=h^\ast\big\},\quad\sigma\geq s.
\end{align*}
Let $h_0\in\clv_{s_2}$ 
be a  solution to \eqref{thf}. Observe that
\be\label{smFke}
\clf_k\in C^{\infty}\left([0,1)\times\clu_\sigma,
H^{\sigma-\tau}(\SSM)\right),\qquad\sigma\in[s+\tau,s_2].
\ee
Furthermore, for $t\in[0,T]$, the linear fourth order differential operator
\[A:=A(t):=\partial_h\clf_k(0,h_0(t))=[\,h\mapsto(hh_0'''(t)+h_0(t)h''')'\,]\]
is elliptic, uniformly in $x$ and $t$. 

\begin{lemma}\label{approxhek} Fix $h_0$ as above. There are positive constants
$\eps_0$ and $C$
and functions \linebreak
$h_{\eps,k}\in\clv_{s+4}$, $\eps\in[0,\eps_0)$,
that satisfy 
\be\label{asyhe}
\int_\SSM h_{\eps,k}(t)\,dx 
=\int_\SSM h_0(0)\,dx\quad
\text{and}\quad
\|\partial_th_{\eps,k}(t)-\clf(\eps,h_{\eps,k}(t))\|_s\leq
C\eps^{k+1},\quad t\in[0,T].
\ee
\end{lemma}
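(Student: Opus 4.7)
The plan is to construct $h_{\eps,k}$ as a polynomial ansatz in $\eps$,
\[
h_{\eps,k} := h_0 + \sum_{j=1}^{k} \eps^j h_j,
\]
find the correctors $h_1,\ldots,h_k$ by Taylor matching, and bound the residual via \eqref{eq:ESS}. Substituting the ansatz into $\partial_t h_{\eps,k} - \clf_k(\eps, h_{\eps,k})$ and expanding $\clf_k$ both in the $\eps$-argument and around $h_0$ in the second argument, I would equate the coefficients of $\eps^0,\ldots,\eps^k$ to zero. The $\eps^0$ equation reproduces $\partial_t h_0 = \clf_k(0,h_0) = -(h_0 h_0''')'$, which is \eqref{thf} and holds by assumption. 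For $j\geq 1$ the coefficient of $\eps^j$ yields a linear inhomogeneous parabolic equation
\[
\partial_t h_j - A(t)\, h_j = G_j(h_0,h_1,\ldots,h_{j-1}),
\]
where $A(t)$ is the uniformly elliptic fourth-order operator introduced before the lemma, and $G_j$ is a polynomial expression in the Fr\'echet derivatives of $\clf_k$ at $h_0$ evaluated on the previously constructed correctors.

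I would then solve these equations recursively with vanishing initial data $h_j(0)=0$, which guarantees $h_{\eps,k}(0)=h_0(0)=h^\ast$. Existence, uniqueness and regularity of $h_j$ follow from standard linear parabolic theory on the torus, since $A(t)$ is fourth-order elliptic with smooth time-dependent coefficients and $G_j$ is a known source. A convenient parity observation simplifies the bookkeeping: the building blocks $w^{[p]}$ and $\kappa^{[p]}$ vanish for odd $p$, and $\clb^{[j]}$ is supported on $j\in\{0,2\}$, so $\clf_k(\eps,h)$ contains only even powers of $\eps$. Consequently $G_j\equiv 0$ for odd $j$, so $h_j=0$ for odd $j$, and only $[k/2]$ nontrivial parabolic problems actually need to be solved. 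For the mass conservation I would note that $\int_\SSM \clf(\eps,h)\,dx=0$, a consequence of the mass-preserving kinematic boundary condition, and this property is inherited by $\clf_k$ by construction. Hence $A(t)$ and every $G_j$ have zero mean, so each $h_j$ stays mean-zero in time, giving $\int_\SSM h_{\eps,k}(t)\,dx=\int_\SSM h_0(t)\,dx=\int_\SSM h_0(0)\,dx$.

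The main obstacle is the careful tracking of spatial regularity; this is what drives the specific choice of $s_2$. Each application of a Fr\'echet derivative of $\clf_k$ costs $\tau$ derivatives by \eqref{smFke}, whereas the fourth-order parabolic smoothing associated with $A(t)$ recovers $4$. Thus each of the $[k/2]$ successive solves produces a net loss of $\tau-4$ derivatives relative to $h_0$, with an additional loss of $\tau+1$ in the first nontrivial step $j=2$ where the source is $\clg_2(h_0)$; the total loss $[k/2](\tau-4)+\tau+1$ is exactly what is absorbed by $s_2-(s+4)$, leaving $h_{\eps,k}\in\clv_{s+4}$ as claimed. Positivity of $h_{\eps,k}$ and the uniform bound $\|h_{\eps,k}(t)\|_s<M$ hold for sufficiently small $\eps_0$, since $h_{\eps,k}=h_0+O(\eps)$ uniformly on $[0,T]$ and $h_0(t)\in\clu_s$ throughout.

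Finally, the residual estimate follows by splitting
\[
\|\partial_t h_{\eps,k}-\clf(\eps,h_{\eps,k})\|_s \leq \|\partial_t h_{\eps,k}-\clf_k(\eps,h_{\eps,k})\|_s+\|\clf_k(\eps,h_{\eps,k})-\clf(\eps,h_{\eps,k})\|_s.
\]
The first summand is $O(\eps^{k+1})$ by construction, since the ansatz was designed so that the Taylor expansion in $\eps$ of $\partial_t h_{\eps,k}-\clf_k(\eps,h_{\eps,k})$ vanishes up through order $\eps^k$; the second summand is $O(\eps^{k+1})$ by \eqref{eq:ESS}.
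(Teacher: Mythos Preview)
Your approach is essentially the paper's: the same polynomial ansatz, the same recursive linear parabolic Cauchy problems $\partial_t h_j = A(t)h_j + G_j$ with $h_j(0)=0$, the same parity observation reducing to even $j$, Taylor remainder plus \eqref{eq:ESS} for the residual, and the zero-mean property of $\clf$ for mass conservation.

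One point to tighten. To invoke \eqref{eq:ESS} at $h=h_{\eps,k}(t)$ you need $h_{\eps,k}(t)\in\clu_{s_1}$ with $s_1\geq s+\tau$, not merely $h_{\eps,k}\in\clv_{s+4}$; since $\tau=k+15/2>4$, the regularity you assert is insufficient for that step. The paper's counting is $\sigma_p:=s_2-1-\tfrac{p}{2}(\tau-4)$, obtained by first passing from $h_0\in H^{s_2}$ to $C^\theta([0,T],H^{s_2-1})$ via interpolation (this is where the extra $1$ is spent, not an ``additional loss of $\tau+1$ at $j=2$'' as you wrote), and then losing $\tau-4$ per even step. This yields $\sigma_k=s+\tau$, hence $h_{\eps,k}\in C([0,T],H^{s+\tau}(\SSM))$ and, for small $\eps_0$, $h_{\eps,k}(t)\in\clu_{s+\tau}$, which is exactly what \eqref{eq:ESS} requires. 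With that correction your argument goes through.
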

\begin{proof}
We construct $h_{\eps,k}$ by the ansatz
\[h_{\eps,k}:=h_0+\eps h_1+\ldots+\eps^kh_k,\]
where for $p=1,\ldots,k$, $h_p$ is recursively determined from
$h_0,\ldots,h_{p-1}$ as solution of the fourth order linear parabolic Cauchy
problem 
\[\left\{\begin{array}{rcl}
\partial_th_p&=&
\frac{1}{p!}\frac{d^p}{d\eps^p}(\clf_k(\eps,h_{\eps,k})|_{\eps=0}
=Ah_p+R_p,\\
h_p(0)&=&0,
\end{array}\right.\]
where $R_p=[t\mapsto R_p(t)]$ is a finite sum of terms of the form 
\[\partial_{\eps}^l\partial_h^m\clf_k(0,h_0)[h_{j_1},\ldots,h_{j_m}],
\qquad 1\leq j_i \leq p-1,\quad l+\sum_{i=1}^m j_i=p.\]
(At this point, the expression
$\frac{d^p}{d\eps^p}(\clf_k(\eps,h_{\eps,k})|_{\eps=0}$ should be understood in
the sense of formal expansions only. It will be justified below.)
Note that $\partial_{\eps}^l\clf_k(0,h_0)=0$ for $l$ odd, and therefore also
$h_p=0$ for $p$ odd. Fix $\theta\in(0,1/4)$. We will show by induction that 
\[h_p\in C^\theta([0,T],H^{\sigma_p}(\SSM))\cap
C^{1+\theta}([0,T],H^{\sigma_p-4}(\SSM)),\qquad
\sigma_p:=s_2-1-\frac{p}{2}(\tau-4).\]
For $p=0$, this follow from our assumptions by a standard interpolation
argument.  Suppose now this is true up to some even
$p\leq k-2$. By \eqref{smFke} we find
\[R_{p+2}\in C^\theta([0,T], H^{\sigma_p-\tau}(\SSM)),\quad R_p(0)\in
H^{s_2-\tau}(\SSM)\]
and by standard results on linear parabolic equations 
(cf. e.g. \cite[Prop.6.1.3]{lun})
\[h_{p+2}\in C^\theta([0,T],H^{\sigma_p-\tau+4}(\SSM))\cap
C^{1+\theta}([0,T],H^{\sigma_p-\tau}(\SSM)).\]
Therefore, by our choice of $s_2$,
\[h_{\eps,k}\in C([0,T],H^{s+\tau}(\SSM))\cap
C^1([0,T],H^{s+\tau-4}(\SSM)).\]
If $\eps\in[0,\eps_0)$, $\eps_0$ sufficiently small, this implies
$h_{\eps,k}(t)\in\clu_{s+\tau}$, and thus,
 by Taylor's theorem applied at $\eps=0$ to $\eps\mapsto\partial_t
h_{\eps,k}-\clf_k(\eps,h_{\eps,k})$,
\[ \|\partial_t h_{\eps,k}-\clf_k(\eps,h_{\eps,k})\|_s\leq C\eps^{k+1}\]
Consequently, we get \eqref{asyhe} from this and \eqref{eq:ESS}.

Finally, for all $h\in\clu_{s+\tau}$ we have $\int_\SSM\clf(\eps,h)\,dx=0$, cf.
\cite[Lemma 3.1]{EM1} and \cite[Lemma 1]{prhs}.
Therefore, by \eqref{asyhe}, $\int_\SSM\partial_t h_{\eps,k}\,dx=O(\eps^{k+1})$.
This implies $\int_\SSM h_p\,dx=0$, $p=1,\ldots,k$, and thus the lemma is
proved completely.
\end{proof}

\section{Proof of the main result}\label{sec5}

Let $T$, $\clu_s$, $h^\ast$ as in the previous section
and fix $T'\in(0,T]$. Let 
\[h_\eps\in C\big([0,T'],\clu_s\big)\cap C^1\big([0,T'],H^{s-3}(\SSM)\big)\] be
a solution of
\eqref{eq:CP} with $h_\eps(0)=h^\ast$. 
For given, sufficiently smooth $h_0$ solving \eqref{thf}, we denote by
$h_{\eps,k}$ the function constructed in Lemma \ref{approxhek}.
The following energy estimates
are the core of our result. 
\begin{prop}\label{pr:apri} \mbox{ }
\begin{itemize}
\item[\rm (i)] Fix $k\in\NNM$ and a solution $h_0\in\clv_{s_2(k)}$ of
\eqref{thf}. There are  constants $C$ and $\eps_0$ depending on $\clu_s,k,T$,
and $h_0$ such that
\be\label{h1est}
\|h_\eps(t)-h_{\eps,k}(t)\|_1\leq
C\eps^{k+1},\qquad\eps\in(0,\eps_0),t\in[0,T'].
\ee
\item[\rm (ii)] Fix $n\in\NNM$. There is a $\beta=\beta(s,n)\in\NNM$ such that
for any solution $h_0\in\clv_{\beta}$ to \eqref{thf} there are  constants $C$
and $\eps_0$ depending on $\clu_s,n,T$,
and $h_0$ such that
\be\label{hsest}
\|h_\eps(t)-h_{\eps,n-1}(t)\|_s\leq
C\eps^n,\qquad\eps\in(0,\eps_0),t\in[0,T'].
\ee
\end{itemize}
\end{prop}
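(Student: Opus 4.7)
The plan is to handle both parts by an energy method applied to the error
$g_\eps := h_\eps - h_{\eps,k}$ (with $k=n-1$ in part (ii)). From Lemma \ref{approxhek},
$g_\eps$ satisfies
\[
\partial_t g_\eps = \clf(\eps,h_\eps)-\clf(\eps,h_{\eps,k}) - r_\eps,\qquad \|r_\eps(t)\|_s\leq C\eps^{k+1},
\]
with $g_\eps(0)=0$. Moreover, $\int_\SSM g_\eps(t)\,dx=0$ throughout, since both $h_\eps$ and $h_{\eps,k}$ conserve mean (the former by \cite{prhs,EM1}, the latter by construction in Lemma \ref{approxhek}). The Poincar\'e inequality therefore transfers $L^2$-bounds from $g_\eps'$ to $g_\eps$ and, similarly, from $g_\eps^{(j+1)}$ to $g_\eps^{(j)}$ via the mean-zero property of derivatives.

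For part (i), I would test the error equation with $-g_\eps''$ to get
$\tfrac12\tfrac{d}{dt}\|g_\eps'\|_0^2 = -\int(\p_t g_\eps)g_\eps''\,dx$.
Using the mean-value theorem to linearize,
\[
\clf(\eps,h_\eps)-\clf(\eps,h_{\eps,k}) = \int_0^1 \partial_h\clf(\eps,h_{\eps,k}+\tau g_\eps)[g_\eps]\,d\tau,
\]
the leading contribution equals $F(\eps,h_\eps)\{\kappa'(\eps,h_\eps)[g_\eps]\}$, whose principal part $F(\eps,h_\eps)\{g_\eps''/(1+\eps^2(h_\eps')^2)^{3/2}\}$ produces exactly the coercive pairing in Lemma \ref{L:ana} applied to the mean-zero function $g_\eps''$; this gives a negative contribution of order $\|g_\eps''\|_0^2+\eps\|g_\eps''\|_{1/2}^2$. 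The remaining first-order linearization terms and the $F'$-contribution are absorbed into this coercive term via \eqref{eq:cru1}--\eqref{eq:cru2} and Young's inequality, at the cost of $C\|g_\eps\|_1^2$. The forcing $-\int r_\eps g_\eps''\,dx$ is controlled by Cauchy--Schwarz/Young as $\tfrac{c}{2}\|g_\eps''\|_0^2+C\eps^{2(k+1)}$. Gr\"onwall's lemma combined with Poincar\'e then yields \eqref{h1est}.

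For part (ii) the strategy is the same but at the level $\clE(t):=\|g_\eps^{(s)}(t)\|_0^2$: apply $\p_x^{s-1}$ to the error equation and test with $-g_\eps^{(s+1)}$. The substitution is given exactly by Lemma \ref{L:6}, which splits
\[
\p_x^{s-1}\bigl(\clf(\eps,h_\eps)-\clf(\eps,h_{\eps,k})\bigr) = F(\eps,h_\eps)\{\kappa'(\eps,h_\eps)[g_\eps^{(s-1)}]\} + P_s(\eps,h_\eps,h_{\eps,k}),
\]
with $\|P_s\|_{-1/2}\leq C\eps^{-2}(1+\|h_{\eps,k}\|_{s+3/2})\|g_\eps\|_{s+1/2}$. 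The principal part is again handled by Lemma \ref{L:ana}, producing coercivity of size $\|g_\eps^{(s+1)}\|_0^2+\eps\|g_\eps^{(s+1)}\|_{1/2}^2$. The $P_s$ contribution is controlled by $\|P_s\|_{-1/2}\|g_\eps^{(s+1)}\|_{1/2}$; balancing the problematic $\eps^{-2}$ against the $\eps\|\cdot\|_{1/2}^2$ enhancement in the coercivity via Young's inequality produces a residual of the form $C\eps^{-M}\|g_\eps\|_{s+1/2}^2$ for some $M$.

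The main obstacle is that a naive Gr\"onwall applied with this residual gives an exponential $e^{CT\eps^{-M}}$, which destroys the convergence. To close the loop I would exploit the freedom in $\beta(s,n)$: choose $\beta$ so large that part (i), applied with an auxiliary $k_1=k_1(s,n)\gg n$, is available and yields $\|h_\eps-h_{\eps,k_1}\|_1\leq C\eps^{k_1+1}$. Together with $\|h_{\eps,k_1}-h_{\eps,n-1}\|_s\leq C\eps^n$ (by construction) and a Gagliardo--Nirenberg interpolation between $\|g_\eps\|_1$ and the uniform bound $\|g_\eps\|_s\leq 2M$ implied by the hypothesis $h_\eps(t)\in\clu_s$, this pins the lower-order residual $\eps^{-M}\|g_\eps\|_{s+1/2}^2$ below any prescribed power of $\eps$ as long as $k_1$ is large enough. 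The integer $\beta(s,n)$ is then fixed precisely so that this scheme produces the final bound \eqref{hsest}, and the smoothness of $h_{\eps,k_1}$ (hence of $h_{\eps,n-1}$) in Lemma \ref{approxhek} remains valid.
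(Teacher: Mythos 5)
The overall plan---energy estimate plus the Dirichlet--Neumann coercivity of Lemma~\ref{L:ana}, with interpolation against the low-order bound to neutralize the bad $\eps^{-M}$ factor---is in the spirit of the paper, and your observation that a naive Gr\"onwall on the high-order energy would blow up is correct. However, two of the technical choices you make would fail as written.

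In part (i), testing with $-g_\eps''$ produces the pairing $\langle F(\eps,h_\eps)\{\kappa'(\eps,h_\eps)[g_\eps]\}\,|\,g_\eps''\rangle$, which is \emph{not} the diagonal pairing $\langle F(\eps,h)\{\varphi\}|\varphi\rangle$ required by Lemma~\ref{L:ana}. Here $\kappa'(\eps,h_\eps)[g_\eps]=\bigl(g_\eps'/(1+\eps^2(h_\eps')^2)^{3/2}\bigr)'$ differs from $g_\eps''$ by a bounded multiplier plus first-order terms, and commuting that multiplier through the nonlocal operator $F(\eps,h_\eps)$ requires an extra commutator estimate that is not supplied. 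The paper instead tests against $-\delta$ with $\delta:=\kappa(\eps,h_\eps)-\kappa(\eps,h_{\eps,k})$ and decomposes $\clf(\eps,h_\eps)-\clf(\eps,h_{\eps,k})=F(\eps,h_\eps)\{\delta\}+\widetilde R$; then $-\langle F(\eps,h_\eps)\{\delta\}|\delta\rangle\le -c\|\delta\|^2_{1/2,\eps}$ is literally Lemma~\ref{L:ana}, $\widetilde R$ is handled by Lemma~\ref{L:32}, and the associated energy is the time-dependent bilinear form $B(\eps,h_\eps,h_{\eps,k})(d,d)$, whose time derivative matches $-\langle\p_td|\delta\rangle$ up to $C\|d\|_1^2$.

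In part (ii) the interpolation step as described cannot close: you try to control $\eps^{-M}\|g_\eps\|_{s+1/2}^2$ by interpolating between $\|g_\eps\|_1$ and the uniform bound $\|g_\eps\|_s\le 2M$, but $s+1/2>s$, so no interpolation inequality of that type exists. The paper keeps the full coercive term $-c\|d\|_{s+1}^2$ on the right-hand side, interpolates $\|d\|_{s+1/2}^2\le\|d\|_{s+1}^{2-1/s}\|d\|_1^{1/s}$, and absorbs the $H^{s+1}$ factor back into $-c\|d\|_{s+1}^2$ via Young's inequality, which leaves only $C\eps^{-10s}\|d\|_1^2$. Moreover, you set $g_\eps:=h_\eps-h_{\eps,n-1}$ and hope to improve $\|g_\eps\|_1$ via an auxiliary $h_{\eps,k_1}$; this cannot work, because $\|h_{\eps,k_1}-h_{\eps,n-1}\|_1$ is itself of order $\eps^n$, so $\|g_\eps\|_1=O(\eps^n)$ for every $k_1$, and $\eps^{-10s}\|g_\eps\|_1^2=O(\eps^{2n-10s})$ diverges. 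The paper's scheme is to run the energy estimate once and for all on $d:=h_\eps-h_{\eps,k}$ with the single choice $k:=n+5s-1$ (so part (i) gives $\|d\|_1=O(\eps^{k+1})$ and hence $\eps^{-10s}\|d\|_1^2=O(\eps^{2n})$), and only at the end pass from $h_{\eps,k}$ to $h_{\eps,n-1}$ by the triangle inequality, their difference being $O(\eps^n)$ in $H^s$. This also pins down $\beta(s,n)=s_2(n+5s-1)$.
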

\begin{proof} (i) Let $\eps_0$ be small enough to ensure that
$h_{\eps,k}(t)\in\clu_s$, $\eps\in[0,\eps_0)$, $t\in[0,T]$. 

 We introduce the differences 
\begin{equation*}
d(t):=h_\eps(t)-h_{\eps,k}(t)\qquad\text{and} \qquad
\delta(t):=\kappa(\eps,h_\eps(t))-\kappa(\eps,h_{\eps,k}(t)).
\end{equation*}
 We obviously have
\be\label{repdel}
\delta(t)=\int_0^1\kappa'(\eps,\tau h_{\eps}(t)-(1-\tau)h_{\eps,k}(t))[d(t)]\,
d\tau, \qquad t\in[0, T'],
\ee 
and since 
\[
\kappa'(\eps,h)[d]=\left(\frac{d'}{(1+\eps^2h'^2)^{3/2}}\right)',
\]
we obtain that there exist positive constants $c_{1,2}=c_{1,2}(\clu_s)$ such
that 
\be\label{equivd}
c_1\|d\|_{\sigma}\leq\|\delta\|_{\sigma-2}\leq
c_2\|d\|_{\sigma},\qquad\sigma\in[1,2].
\ee
(Here and in the sequel, we will omit the argument $t$  if no
confusion is likely.)
In the same spirit, for $h,\bar h\in\clu_s$, we introduce the bilinear form
$B(\eps,h,\bar h):H^1(\SSM)\times H^1(\SSM)\to\RRM$
by
\[
B(\eps,h,\bar h)(e,f):=\int_0^1  \int_\SSM \frac{e'f'}{(1+\eps^2(\tau 
h'+(1-\tau)\bar h')^2)^{3/2}}  \, d\sigma\, d\tau .
\]
Observe that there are positive constants $c_{1,2}=c_{1,2}(\clu_s)$ such
that 
\be\label{equivip}
c_1\|d\|_1^2\leq B(\eps, h, \bar h)(d,d)\leq c_2 \|d\|_1^2
\ee
as $d(t)$ has zero average over $\SSM$.

From \eqref{repdel} and \eqref{eq:Boun} we find, via integration by parts,
\begin{align}\label{curvest}
-\langle \p_t d\,|\,\delta\rangle_{L^2(\SSM)}&=B(\eps,
h_\eps,h_{\eps,k})(d,\p_td)\nonumber\\
&=\tfrac{1}{2}\Big((\partial_t\big(B(\eps,h_\eps,h_{\eps,k})(d,
d)\big)\nonumber\\
&\phantom{=}-\partial_hB(\eps , h_\eps,h_{\eps,k})(d,d)\partial_t
h_\eps-\partial_{\bar h}B(\eps,
h_\eps,h_{\eps,k})(d,d)\partial_t h_{\eps,k}\Big)\nonumber\\
&\geq\tfrac{1}{2}\partial_t\big(B(\eps,h_\eps,h_{\eps,k})(d,
d)\big)-C\|d\|_1^2.
\end{align}

Furthermore, from \eqref{eq:CP} and \eqref{asyhe} we have
\begin{align}\label{evold}
\p_t d(t)=&F(\eps,h_\eps(t))\{\kappa(\eps,h_\eps(t))\}
-F(\eps,h_{\eps,k}(t))\{\kappa(\eps,h_{\eps,k}(t))\}+R(t)\nonumber\\[1ex]
=&F(\eps,h_\eps(t))\{\delta(t)\}+\wt R(t)+R(t),
\end{align}
where
\begin{equation}\label{eq:qe1}
\max_{[0,T]}\|R(t)\|_s\leq C\eps^{k+1}
\end{equation}
and
\[
\wt R:=\int_0^1F'(\eps, \tau
h_{\eps,k}+(1-\tau)h_\eps)[d]\{\kappa(\eps,h_{\eps,k})\}\, d\tau
\]
By Lemma \ref{L:32} and \eqref{equivd}, 
\begin{equation}\label{eq:qe2}
\|\wt R\|_{1/2}\leq C\|d\|_{3/2}
\|\kappa(\eps,h_{\eps,k})\|_{s_0+3/2}\leq C\|\delta\|_{-1/2},\qquad
 \eps\in (0,\eps_0),\;t\in [0,T'].
\end{equation}
Multiplying \eqref{evold} by $-\delta$ and applying \eqref{eq:ana},
\eqref{eq:qe1},  \eqref{eq:qe2},  and an interpolation inequality we get
\begin{align*}
-\langle \p_t d\,|\,\delta\rangle_{L^2(\SSM)}&\leq -c\|\delta\|_{1/2,\eps}^2+
C\|\delta\|_{-1/2}^2+C\eps^{k+1}\|\delta\|_{-1}\\[1ex]
&\leq -c\|\delta\|_{0}^2+\left(c\|\delta\|_{0}^2+C\|\delta\|_{-1}^2\right)
+C\left(\eps^{2k+2}+\|\delta\|_{-1}^2\right)\\[1ex]
&\leq C\left(\|\delta\|_{-1}^2+\eps^{2k+2}\right).
\end{align*}
Together with \eqref{curvest}, this shows that
\[
\frac{d}{dt} B(\eps, h_\eps,h_{\eps,k})(d,d)\leq C(\eps^{2k+2}+ B(\eps,
h_\eps,h_{\eps,k})(d,d)) 
\]
for all $\eps\in (0,\eps_0)$, $t\in [0,T'].$
Taking into consideration that $d(0)=0,$ we find by Gronwall's inequality
that
\[
c_1\|d\|_1^2\leq B(\eps, h_\eps,h_{\eps,k})(d,d)\leq C(T)\eps^{2k+2},
\]
which proves \eqref{h1est}.

(ii) Set $k:=n+5s-1$ and $\beta:=s_2(k)$. Let $\eps_0$ be small enough to
ensure that
$h_{\eps,k}(t)\in\clu_s$, $\eps\in[0,\eps_0)$, $t\in[0,T]$.

Instead of \eqref{hsest} we are going to prove the equivalent estimate
\be\label{hsest2}
\|h_\eps(t)-h_{\eps,k}(t)\|_s\leq
C\eps^n,\qquad\eps\in(0,\eps_0),t\in[0,T'].
\ee

 Let $\delta_{s-1}:=\kappa'(\eps,h_\eps)[d^{(s-1)}]$. Then, in analogy to 
\eqref{curvest},
\be\label{curvest2}
-\langle \p_td^{(s-1)}\,|\,\delta_{s-1}\rangle_{L^2(\SSM)}
\geq\tfrac{1}{2}\p_t(B(\eps,h_\eps)(d^{(s-1)},d^{(s-1)}))-C\|d\|_s^2,
\ee
where $B(\eps,h_\eps):=B(\eps,h_\eps,h_\eps)$. 

On the other hand, differentiating the relation 
\[
\p_td=\clf(\eps,h_\eps)-\clf(\eps,h_{\eps,k})+R
\]
$(s-1)$ times with respect to $x$ we get (cf. Lemma \ref{L:6})
\[\p_td^{(s-1)}=-F(\eps,h_\eps)[\delta_{s-1}]+P_s(\eps,h_\eps,h_{\eps,k})+
R^{(s-1)}.\]
Recalling \eqref{eq:ana}, \eqref{eq:L6}, \eqref{equivd}, and \eqref{eq:qe1} we
obtain from this by Young's inequality
\begin{align*}
-\langle \p_td^{(s-1)}\,|\,\delta_{s-1}\rangle_{L^2(\SSM)}
&\leq -c\|\delta_{s-1}\|^2_{1/2,\eps}
+C\eps^{-2}\|d\|_{s+1/2}\|\delta_{s-1}\|_{1/2}
+C\eps^{k+1}\|\delta_{s-1}\|_{-1}\\
&\leq -c\|\delta_{s-1}\|^2_0+C\eps^{-5}\|d\|^2_{s+1/2}+C\eps^{2k+2}
+\|\delta_{s-1}\|^2_{-1}\\
&\leq -c\|d\|_{s+1}^2+C\eps^{-5}\|d\|^2_{s+1/2}+C\eps^{2k+2}.
\end{align*}
Consequently, by \eqref{curvest2}, \eqref{h1est}, and an interpolation
inequality,
\begin{align*}
\p_tB(\eps,h_\eps)(d^{(s-1)},d^{(s-1)})&\leq -c|d\|_{s+1}^2
+C\eps^{-5}\|d\|_{s+1}^{2-1/s}\|d\|_1^{1/s}+C\eps^{2k+2}\\
&\leq C(\eps^{-10s}\|d\|_1^2+\eps^{2k+2})\leq C\eps^{2k+2-10s}
\leq C\eps^{2n}.
\end{align*}
Integrating over $t$ and using  \eqref{equivip}, we obtain
\eqref{hsest2}.
\end{proof}
\begin{proof}[Proof of Theorem {\rm\ref{thm2}}] Choose $k$ and
$\beta=\beta(s,n)$ as in the proof of Proposition \ref{pr:apri} (ii), let
$h^*:=h_0(0)$ and let $\alpha$ and $M$ be such that
$h_0([0,T])\subset\clu_s$. 
By compactness, $\mu:=\mathop{\rm dist}(\p\clu_s,h_0([0,T]))>0$. Let $\eps_0$ be
small enough to ensure that $h_{\eps_,k}([0,T])\subset\clu_s$, 
\be\label{distest}
\mathop{\rm dist}(\p\clu_s,h_{\eps,n-1}([0,T]))>\mu/2,\qquad\eps\in[0,\eps_0),
\ee
and $C\eps_0^n<\mu/4$, where $C$ is the constant from \eqref{hsest}. 

Let $\eps\in(0,\eps_0)$ and let
\[h_\eps\in C\big([0,T_\eps),H^{\beta-1}(\SSM)\big)\cap
C^1\big([0,T_\eps),H^{\beta-4}(\SSM)\big)\]
be a maximal solution to \eqref{eq:CP} with $h_\eps(0)=h^\ast$.
In view of Proposition \ref{pr:apri} (ii), it remains to show that
$T_\eps>T$. Assume $T_\eps\leq T$. The blowup result in Theorem \ref{thm1} (iii)
implies that there is a $T'\in(0,T)$ such that
$h_\eps([0,T'])\subset\clu_s$ but $\mathop{\rm dist}(\p\clu_s,h(T'))<\mu/4.$
In view of \eqref{hsest} and \eqref{distest}, this is a contradiction to our
choice of $\eps_0$. 
\end{proof}

\subsubsection*{Acknowledgements}
The research leading to this paper was carried out in part while the second
author enjoyed the hospitality of the Institute of Applied Mathematics as a
guest researcher at Leibniz University Hannover. Moreover, we express our
gratitude to Prof. M. G\"unther (Leipzig University) whose ideas for
\cite{guprth} are crucial for the present paper as well.

\begin{small}

\end{small}
\end{document}